 \newtheorem{thm}{Theorem}[section]
 \newtheorem{lem}[thm]{Lemma}
 \theoremstyle{definition}
 \numberwithin{equation}{section}
 \numberwithin{equation}{section}
\newtheorem{Lemma A.1}{Lemma A.1}
\theoremstyle{definition}
\theoremstyle{remark}
\def\R{\bf R}
\def\al{\aligned}
\def\eal{\endaligned}
\def\be{\begin{equation}}
\def\ee{\end{equation}}
\def\lab{\label}
\def\e{\epsilon}
\def\lam{\lambda}
\def\R{\bf R}
\def\al{\aligned}
\def\p{\partial}
\numberwithin{equation}{section}
\begin{document}
\title{ On ancient periodic solutions to  the axi-symmetric Navier-Stokes equations }

\author{Zhen Lei \footnotemark[1]
\and Xiao Ren   \footnotemark[1]
\and Qi S. Zhang    \footnotemark[2]
}
\renewcommand{\thefootnote}{\fnsymbol{footnote}}
\footnotetext[1]{
School of Mathematical Sciences; LMNS and Shanghai Key Laboratory for Contemporary Applied Mathematics,
Fudan University, Shanghai 200433, P. R.China.}
\footnotetext[2]{Department of
Mathematics,  University of California, Riverside, CA 92521, USA.}

\date{\today}

\maketitle

\begin{abstract}

It has been an old and challenging problem to classify bounded ancient solutions of the incompressible Navier-Stokes equations, which could play a crucial role in the study of global regularity theory.
In the  works \cite{KNSS, SS09}, the authors made the following conjecture:
 \textit{for the 3D
axially symmetric Navier Stokes equations, bounded mild ancient solutions are constants}. In this article, we solve this conjecture in the case that $u$ is periodic in $z$. To the best of our knowledge, this seems to be the first result on this conjecture without unverified decay conditions. It also shows that nontrivial periodic solutions are not models of possible singularities  or
high velocity regions. Some partial results in the non-periodic case is also given.

\end{abstract}

\section{Introduction}

The classical Liouville theorem, stating that bounded harmonic functions in $\R^n$ are constants, has been extended to many other elliptic and parabolic equations in different settings, and further,  numerous applications have been found. The following is one of them. To study singularity formation for solutions of  nonlinear equations, one often blows up the solution near singularity. This procedure often results in a bounded solution which exists in whole space or, in the case of evolution equations, one which exists in the time interval $(-\infty, 0]$. Such solutions are often referred to as ancient solutions. Information about ancient solutions reveals singularity structure or the lack of singularity of the original solutions, as well as the behavior of solutions in regions with high value. For the 3 dimensional incompressible Navier-Stokes equations, one can also carry out this procedure. Naturally, one may hope to classify the resulting ancient solutions.  However, this problem seems beyond the reach of existing theories. In fact, it is still widely open even for the stationary case. The full stationary problem seems intractable since it contains, as a special case, another old unsolved problem concerning D-solutions, which asks whether a 3 dimensional stationary solution with finite Dirichlet energy and vanishing at infinity is identically 0. Although at first glance,
this extra condition seems very restrictive compared with boundedness of solutions, it has not offered any help, even in the axially symmetric case. See for instance recent papers \cite{CPZZ}, \cite{Ser}.

During the last few decades, a large amount of analysis has been carried out for the 3 dimensional incompressible Navier-Stokes equations. We mention here a few well-known and related works. In 1934, Jean Leray\cite{Le} raised the existence problem of backward self-similar solutions which can be considered as ancient solutions with a uniform profile.  In \cite{NRS}, J. Necas, M. Ruzicka and V. ${\rm \breve{S}}$ver${\rm \acute{a}}$k proved that such solutions must be trivial if the profile is in $L^3$(see \cite{Ts} for a local version). In the significant work \cite{ESS}, the authors showed that $L_t^\infty L_x^3$ solutions must be regular. The first step of their proof is to rescale the solution near potential singularities and obtain a bounded ancient solution as described in the first paragraph. Both of the two works above are based on the landmark partial regularity theory of Caffarelli-Kohn-Nirenberg\cite{CKN}. More precisely, the authors showed that the $1$-dimensional Hausdorff measure of the singularity set of suitable weak solutions must be $0$. In particular, this implies that for the axially symmetric case, blow-up can only happen along the axis. We remark that the Caffarelli-Kohn-Nirenberg method is a perturbative one. In many cases, it seems that the blow-up method has its own power, compared with the perturbative techniques.

In \cite{KNSS}, G. Koch, N. Nadirashvili, G. Seregin and V. Sverak made the following conjecture: \textit{for incompressible
axially symmetric Navier-Stokes equations (ASNS) in three dimensions: bounded mild ancient solutions are constants}. They obtained some partial results under critical decay conditions for the velocity. For example, Liouville theorem for ASNS holds if one assumes that $|v(x, t)| \le C/|x'|$ where $v$ is the velocity, $x=(x^1, x^2, x^3)$ and $x'=(x^1, x^2, 0)$ are the Cartesian coordinates. See also an extension in \cite{LZ11} to the case that $v$ is in $BMO^{-1}$ (which is ture if $|v_z| \le
\frac{C}{|x'|}$) and further improvements in \cite{SZ}.

In this work, we consider bounded ancient solutions of ASNS, which are periodic in the $z$ variable of the cylindrical system. In this case the conjecture is fully solved. We emphasize that no decay condition is imposed on the velocity. It is helpful to compare our result with another nonlinear parabolic system, namely, the 3 dimensional Ricci flow. Perelman \cite{Pe} showed that the typical model for high curvature region is $S^2 \times R$ which is periodic in the $S^2$ part.  In contrast, Theorem \ref{thzhouqi} shows periodic solutions are not models of high velocity region for ASNS, and appears to be the first result for Navier-Stokes equations in this direction.

Now let us elaborate the main results in detail. Let $v$ be the velocity; while $v_r$, $v_z$ and $v_\theta$ be the components of $v$ in the cylindrical coordinates of $\{ e_r, \, e_z, \, e_\theta \}$ respectively. Suppose $v_r$, $v_z$ and $v_\theta$ are independent of $\theta$, then ASNS takes the form of
\begin{align}
\label{eqasns}
\begin{cases}
   \big (\Delta-\frac{1}{r^2} \big )
v_r-(b\cdot\nabla)v_r+\frac{v_{\theta}^2}{r}-\frac{\partial
p}{\partial r}-\frac{\partial v_r}{\partial t}=0,\\
   \big   (\Delta-\frac{1}{r^2}  \big
)v_{\theta}-(b\cdot\nabla)v_{\theta}-\frac{v_{\theta}v_r}{r}-\frac
{\partial v_{\theta}}{\partial t}=0,\\
 \Delta v_z-(b\cdot\nabla)v_z-\frac{\partial p} {\partial
z}-\frac{\partial v_z}{\partial t}=0,\\
 \frac{1}{r}\frac{\partial (rv_r)}{\partial r} +\frac{\partial
v_z}{\partial z}=0,
\end{cases}
\end{align}
 where
\begin{equation} \label{eq-b}
 b(x,t) =v_r e_r + v_z e_z
\end{equation}
and the last equation is the divergence-free
 condition. Here, $\Delta$ is the cylindrical  scalar Laplacian
and $\nabla$ is the cylindrical gradient field:
 \begin{align*}
 \Delta =\frac{\partial^2}{\partial r^2}+
\frac{1}{r}\frac{\partial}{\partial r}+\frac{1}{r^2}\frac{\partial
^2}{\partial \theta ^2}+\frac{\partial^2}{\partial
 z^2},\ \
 \nabla =  \Big  (\frac{\partial }{\partial r},\frac{1}{r}\frac{\partial
}{\partial \theta},\frac{\partial }{\partial  z}  \Big  ).
\end{align*}
Observe that the equation for $v_{\theta}$ does not depend on the
pressure. Let $\Gamma=rv_{\theta}$, one sees that the function
$\Gamma$ satisfies
\begin{equation}\label{Gamma/vtheta}
\Delta \Gamma -(b\cdot\nabla)\Gamma-\frac{2}{r}\frac{\partial
\Gamma}{\partial r}-\frac{\partial\Gamma}{\partial t}=0,\,\text{div} \,  b=0.
\end{equation}

The main result of the paper is:

\begin{thm}
\label{thzhouqi}
Let $v=v_\theta e_\theta +v_r e_r + v_z e_z $ be a bounded mild ancient solution to the ASNS such that $\Gamma= r v_\theta$ is bounded. Suppose  $v$ is periodic in the $z$ variable. Then $v= c e_z$ where $c$ is a constant.
\end{thm}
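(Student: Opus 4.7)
My plan is to proceed in two stages: (I) show the swirl vanishes, $\Gamma = r v_\theta \equiv 0$; and (II) treat the resulting no-swirl problem to conclude $v = c e_z$.

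For Stage I, the scalar $\Gamma$ is bounded by hypothesis, vanishes on the axis $\{r=0\}$, is $L$-periodic in $z$, defined for $t\in(-\infty,0]$, and solves the drift-diffusion equation \myref{Gamma/vtheta} with bounded divergence-free drift $b$ (bounded along with derivatives by interior parabolic regularity for bounded mild solutions). I would test this equation against $|\Gamma|^{p-2}\Gamma$, for $p$ an even integer, in the axisymmetric measure $r\,dr\,dz$ on the period strip $\{r>0\}\times[0,L]$. The advective contribution vanishes by the divergence-free identity $\partial_r(rv_r)+\partial_z(rv_z)=0$ together with $z$-periodicity; the dissipative contribution integrates by parts to $-(p-1)\int |\nabla\Gamma|^2|\Gamma|^{p-2}\,r\,dr\,dz$; and the singular Hardy term $-\tfrac{2}{r}\partial_r\Gamma$, paired with the $\tfrac{1}{r}\partial_r\Gamma$ inside $\Delta$ under the weight $r$, produces only a boundary contribution at $r=0$ that vanishes because $\Gamma|_{r=0}=0$. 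This yields the formal monotonicity
\begin{equation*}
\frac{d}{dt}\int_0^L\!\!\int_0^\infty |\Gamma|^p\,r\,dr\,dz \le 0.
\end{equation*}
Because the $r$-direction is not compactified, global integrability is not automatic, and I would implement this rigorously either via a radial cut-off $\chi_R(r)$ with a Caccioppoli commutator estimate, or via a backwards-in-time blow-down: extract a smooth limit of the time-translates $\Gamma(\cdot,\cdot,t+t_k)$ with $t_k\to -\infty$ using parabolic regularity, show the limit is stationary, and invoke a steady-state Liouville theorem (an $L$-periodic, axis-vanishing, bounded stationary solution of the reduced equation is identically zero).

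For Stage II, once $v_\theta = 0$ the only vorticity component is $\omega_\theta=\partial_z v_r - \partial_r v_z$, and the renormalized vorticity $\eta := \omega_\theta/r$ satisfies, by direct computation,
\begin{equation*}
\partial_t\eta + b\cdot\nabla\eta = \Big(\partial_r^2 + \tfrac{3}{r}\partial_r + \partial_z^2\Big)\eta,
\end{equation*}
i.e.\ the heat equation in five spatial dimensions (identifying $r$ with the radial coordinate of $\bR^4$) acted on by the bounded divergence-free drift $b$. Interior parabolic regularity gives $\omega_\theta$ bounded, and axis regularity together with $v_r|_{r=0}=0$ makes $\eta$ bounded across $r=0$. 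A parabolic Liouville theorem for this five-dimensional operator on the periodized ancient cylinder --- for instance by monotonicity of $\sup\eta$ and $\inf\eta$ in $t$ together with the strong maximum principle, or by Nash--Moser Harnack for the heat equation with bounded divergence-free drift --- forces $\eta \equiv \text{const}$, and boundedness of $r\eta = \omega_\theta$ in $r$ forces this constant to be $0$. Then $v$ is bounded, divergence-free, curl-free, axisymmetric, $z$-periodic, and smooth across the axis; a Fourier expansion in $z$ reduces each non-zero mode to a modified Bessel ODE whose only solution bounded on $[0,\infty)$ is trivial, and the zero mode is a $z$-independent harmonic axisymmetric divergence-free field which by axis regularity and boundedness collapses to $v_r=0$, $v_z=\text{const}$.

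I expect Stage I to be the crux. The Hardy coefficient $\tfrac{2}{r}$ is genuinely singular at the axis and disqualifies off-the-shelf parabolic Liouville theorems for equations with bounded coefficients, while the non-compact $r$-direction prevents global period-strip $L^p$-energies from being finite a priori. The substantive technical work will therefore consist in constructing (or localizing) a monotonicity formula for $\Gamma$ that simultaneously exploits both available cancellations --- the axis vanishing $\Gamma|_{r=0}=0$ and the $z$-periodicity --- while controlling the radial behavior at infinity. Once $\Gamma\equiv 0$ is established, Stage II is a well-dimensioned parabolic rigidity argument and should be comparatively routine.
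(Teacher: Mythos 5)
Your Stage I, which is the crux, has a genuine gap: the proposed $L^p$ energy monotonicity does not close, and even if it did it would not yield the Liouville theorem. Concretely, once you insert a radial cut-off $\chi_R(r)$ (which you must, since $\int_0^\infty |\Gamma|^p\,r\,dr$ is not a priori finite), the advective contribution does not cancel. Integrating by parts with $\nabla\cdot b=0$ leaves the commutator term $\int |\Gamma|^p v_r\,(\partial_r\chi_R^2)\,r\,dr\,dz$, which is of order $R$ because $v_r$ is merely bounded and the integration region $r\sim R$ has weight $r\sim R$. A ``Caccioppoli commutator estimate'' cannot absorb this. The missing ingredient --- and the key point of the paper's argument --- is the observation that $z$-periodicity upgrades $v_r$ to being the $z$-derivative of a \emph{uniformly bounded} quantity: $v_r = -\partial_z\big(L_\theta(r,z,t)-L_\theta(r,0,t)\big)$ with $|L_\theta(r,z,t)-L_\theta(r,0,t)|\lesssim Z_0\|v_r\|_\infty$ (see \eqref{L-infty-(-1)}). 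Integrating by parts in $z$ and applying Young's inequality then trades the drift error for a piece of the dissipation plus an $O(1)$ term. Even so, $\frac{d}{dt}\int |\Gamma|^p\chi_R^2\,r\,dr\,dz\le O(1)$ is not monotonicity, and since the weighted energy itself is $O(R^2)$ this gives no contradiction; a pure monotonicity formula for the global $L^p$ norm is the wrong target. Likewise the backward blow-down does not rescue it: the resulting ``steady-state Liouville theorem'' you invoke is itself nontrivial and in fact requires the same periodicity machinery (the paper reproves the steady case in Section 5 using the additional fact, Lemma \ref{lem-vvanishing}, that $v_r\to 0$ uniformly as $r\to\infty$, which is again a consequence of periodicity).

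What the paper actually does for Stage I is a full De Giorgi--Nash--Moser oscillation argument. Lemma \ref{lemma-21} establishes a scale-invariant mean value inequality for nonnegative subsolutions by Moser iteration, using the stream-function trick above together with a Sobolev inequality \eqref{sobolev-embedding} for $z$-periodic functions that reflects the effective two-dimensionality of $\mathbb{R}^2\times\mathbb{S}^1$ at large scales. Lemmas \ref{lemma-31}--\ref{lemma-32}, via a weighted Poincar\'e inequality, Nash's inequality, and an $L^1$ lower bound exploiting $\Gamma|_{r=0}=0$, produce a quantitative lower bound on the measure of the set where the renormalized $\Phi$ stays away from zero. Combining these with the mean value inequality applied to $(\delta-\Phi)_+$ yields an oscillation decay $(\sup-\inf)\Gamma\le (1-\sigma)(\sup-\inf)\Gamma$ from scale $R$ to a fixed fraction of $R$, and iterating forces $\Gamma\equiv 0$. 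This is what replaces the monotonicity formula you were hoping for. For Stage II, the paper simply cites \cite{KNSS} for the no-swirl case; your direct sketch via $\eta=\omega_\theta/r$ is in the same spirit as \cite{KNSS}, but note that the Liouville theorem you invoke for the five-dimensional heat equation with bounded, non-decaying divergence-free drift is precisely the nontrivial content of that reference, not an off-the-shelf Nash--Moser statement.
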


For the definition of mild solutions, one can consult the paper \cite{KNSS}. Roughly speaking, these solutions satisfy certain integral equations involving the heat and Stokes kernel, ruling out the so-called parasitic solutions such as $v=a(t)$ and $P=- a'(t) \cdot x$ where $a(t)$ is any vector field depending only on time. Moreover, these solutions are smooth in space-time if they are bounded.

A remark is in order for the assumption that the function $\Gamma = r v_\theta$ is bounded.
It is well known that $\Gamma$ satisfies the maximum principle. Hence if the initial data of a Cauchy problem satisfies the condition, then it will be satisfied for all time. Since $\Gamma$ is scaling invariant, ancient solutions arising from the blow-up procedure will also satisfy the condition, with perhaps a different constant due to the shift of the axis during the blow-up. So this condition is essentially  not a restriction for the study of possibile singularities of ASNS.

Let us describe the general idea of the proof.  We will prove, by the De Giorgi-Nash-Moser method that $\Gamma$ satisfies a partially scaling invariant H\"older estimate which forces $\Gamma \equiv 0$.
Then the problem is reduced to the swirl free case that is solved in \cite{KNSS}.
In general this method will break down in large scale, unless one imposes scaling invariant decay conditions on $v_r$ and $v_z$.  Although no decay conditions on $v_r$ or $v_z$ are assumed in our theorem, in Section 2 and Section 3 we will demonstrate that the classical Nash-Moser iteration method can be carefully adapted to our situation. Our key observation is the following: the incompressibility condition $\nabla \cdot b=0$ along with the periodicity in $z$ gives us extra information on $v_r$. In fact we will essentially use  $v_r(r,\theta,z) = - \partial_z (L_\theta(r,\theta,z)-L_\theta(r,\theta,0)) \in (L^\infty)^{-1}$, where $L_\theta$ is the angular stream function(See \eqref{L-infty-(-1)}). Another helpful factor is that the spatial domain
$\mathbb{R}^2 \times S^1$ behaves like a 2 dimensional Euclidean space in large scale, even though it really behaves 3 dimensionally near the axis.

Next, we present a theorem  that deals with non-periodic case, under an extra condition that $\Gamma$ converges to its maximum at certain speed. Even though our method cannot yet reach  the full conjecture in \cite{KNSS}, it can be regarded as a step forward to prove the conjecture. Besides, the method may be of independent value and use elsewhere. In section 5 we will apply it to present a new proof for the steady periodic case.

Let
\be
\lab{limsupgam}
\lim\sup_{r \to \infty} \Gamma = \lim\sup_{r \to \infty} \sup_{z, t} \Gamma(r, z, t).
\ee  It will be shown in Section 4 that if $v$ is any bounded ancient solution such that $\Gamma$ is bounded, then $\lim\sup_{r \to \infty} \Gamma = \sup \Gamma$.

\begin{thm}
\lab{thgudai} Let $v=v_\theta e_\theta +v_r e_r + v_z e_z $ be a bounded mild ancient solution to the ASNS such that $\Gamma= r v_\theta$ is bounded. There exists a small number $\e_0 \in (0, 1)$, depending only on $\Vert v \Vert_\infty$,
 such that if
\be
\lab{giginfty2}
| \Gamma^2(r, z, t) - \lim\sup_{r \to \infty} \Gamma^2| \le \frac{\e_0}{r} \lim\sup_{r \to \infty} \Gamma^2
\ee
holds uniformly for $z$, $t$, and large $r$, then $v= c e_z$ where $c$ is a constant.

\end{thm}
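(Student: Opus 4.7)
My plan is to argue by contradiction: suppose $M := \limsup_{r \to \infty}\Gamma^2 > 0$ and derive a contradiction. Once $M = 0$ is forced, we have $\sup\Gamma^2 = 0$ (by the result to be established in Section~4) and hence $\Gamma \equiv 0$. The velocity is then swirl-free, and the classification of bounded mild ancient solutions of the swirl-free axisymmetric Navier--Stokes equations from \cite{KNSS} yields $v = c e_z$.

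Assume $M > 0$. The function $\Gamma^2$ is a subsolution of $L := \Delta - b\cdot\nabla - \tfrac{2}{r}\partial_r - \partial_t$; a direct computation using $L\Gamma = 0$ gives $L(\Gamma^2) = 2|\nabla\Gamma|^2 \geq 0$. Equivalently,
\be
\lab{fprop}
f := 1 - \Gamma^2/M
\ee
is a non-negative supersolution of the standard parabolic operator $L_0 := -L$. Two features of $f$ are essential to the plan: first, $|\Gamma| = r|v_\theta| \leq r\|v\|_\infty$ gives $f \geq 1/2$ on the tube $\{r \leq r_* := \sqrt{M/2}/\|v\|_\infty\}$ about the axis; second, the hypothesis \eqref{giginfty2} gives $f \leq \epsilon_0/r$ for all sufficiently large $r$, uniformly in $z$ and $t$.

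The central step is to apply the weak parabolic Harnack inequality for non-negative supersolutions of $L_0$ along a chain of overlapping space-time cylinders of spatial radius comparable to $r_*$, starting from a cylinder inside the tube $\{r \leq r_*\}$ (where $f \geq 1/2$) and propagating outward in the radial direction. Each Harnack step loses a factor $C^{-1}$, so after $N$ steps $f \geq C^{-N}/2$ on a cylinder near $r \sim Nr_*$; combined with the decay bound $f \leq \epsilon_0/(Nr_*)$, this gives the inequality $N r_* \leq 2\epsilon_0\, C^N$. Since the right-hand side grows exponentially in $N$ while the left grows linearly, for $\epsilon_0$ below a threshold $\epsilon_0^*(\|v\|_\infty)$ the inequality fails at some $N$, yielding the sought contradiction. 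The main obstacle is to arrange that $\epsilon_0^*$ depends only on $\|v\|_\infty$: the radius $r_*$ depends on $M$ and the singular drift $\tfrac{2}{r}$ varies along the chain, so naive bookkeeping produces a constant also depending on $M$. This is addressed by working in the equivalent five-dimensional picture $W := \Gamma/r^2$, which satisfies a non-singular five-dimensional parabolic equation with bounded drift $b$ and bounded potential $2v_r/r$; the Harnack chain applied in that picture, combined with parabolic rescaling, yields a threshold $\epsilon_0^*$ depending only on $\|v\|_\infty$.
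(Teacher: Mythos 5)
Your setup is sound and mirrors the paper's preliminaries: the observation that $\Gamma^2$ is a subsolution (so $f=1-\Gamma^2/M$ is a nonnegative supersolution), that $f\ge 1/2$ on the tube $\{r\le r_*\}$ because $|\Gamma|=r|v_\theta|\le r\|v\|_\infty$, and that $M=\limsup_{r\to\infty}\Gamma^2=\sup\Gamma^2$ are all correct, and the last point is exactly the paper's \eqref{limsup=sup}. The five-dimensional lift $W=\Gamma/r^2$ with $\Delta_{5D}W-b\cdot\nabla W-\tfrac{2v_r}{r}W-\partial_tW=0$ is also a legitimate way to desingularize the drift $\tfrac{2}{r}\partial_r$.

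The central step, however, does not close. A weak Harnack chain for a nonnegative \emph{super}solution loses a fixed multiplicative factor $c\in(0,1)$ per unit step, so after reaching radius $Nr_*$ it yields $f\gtrsim c^{N}$. This lower bound decays \emph{exponentially} in $N$, whereas the hypothesis \eqref{giginfty2} only provides the \emph{polynomial} upper bound $f\le\epsilon_0/(Nr_*)$. Since $c^{N}=o(1/N)$, the combined inequality $Nr_*\,c^{N}\le 2\epsilon_0$ holds automatically for all large $N$, regardless of how small $\epsilon_0$ is; it can only fail for small $N$. But the hypothesis is assumed only ``for large $r$,'' i.e.\ for $r\ge R_0$ with $R_0$ unspecified, so the contradiction must be extracted at $N\ge R_0/r_*$, and there the best you get is a threshold of order $R_0\,c^{R_0/r_*}$, which goes to $0$ as $R_0\to\infty$. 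Thus your $\epsilon_0$ would depend on $R_0$, not only on $\|v\|_\infty$, contrary to the theorem. This is not a bookkeeping issue but a structural one: the $1/r$ rate in \eqref{giginfty2} is exactly borderline for positive supersolutions in $\mathbb{R}^3$ (it is the decay of the Green's function), so any argument must be sharp, and the weak Harnack chain is intrinsically lossy because it discards the defect term $L f=-\tfrac{2}{M}|\nabla\Gamma|^2$. A secondary issue is that the Harnack constant per step depends on the rescaled drift $r_*\|b\|_\infty\sim\|\Gamma\|_\infty$, a separate, scale-invariant parameter not controlled by $\|v\|_\infty$ alone; the 5D picture removes the $\tfrac{2}{r}\partial_r$ singularity but not this dependence, though this could be absorbed by normalizing $\|\Gamma\|_\infty=1$ as the paper does.

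By contrast, the paper's proof is a weighted energy estimate: it integrates the equation against $\Gamma\phi_i^2\lambda(r)\,drdz\,dt$, where $\lambda$ is a triangular weight equal to $r$ near the axis and constant at infinity, $\phi_1$ is an explicit cutoff, and $\phi_2$ solves an auxiliary backward parabolic problem \eqref{eqphi2.t4}. This keeps the Dirichlet term $\iint|\nabla\Gamma|^2\phi^2\lambda$ on the left and uses the sign of $\Gamma^2-1$ together with the construction of $\phi_2$ to make the dangerous bulk terms nonpositive, leaving only boundary and $I_1$-type terms that are linear in $\epsilon_0$ and dominated by the coercive $-\tfrac{1}{6}RT$ term for small $\epsilon_0$, uniformly in $R_0,r_0$. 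Retaining the Dirichlet energy, rather than merely using the supersolution property, is precisely what allows the threshold $\epsilon_0$ to be uniform in the ``large $r$'' radius. To rescue a comparison-based approach you would need a sharp barrier (Green's-function type) rather than a chain; a chain cannot reach the borderline $1/r$ rate.
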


The rest of the paper is organized as follows. In Section 2 we prove a mean value inequality using Moser's iteration with adaptations, where a dimension reduction effect is achieved. In Section 3, we use De Giorgi-Nash-Moser type arguments to prove Theorem \ref{thzhouqi}, following the scheme in \cite{LZ11}, which, in turn, builds on the idea of \cite{CSTY1}
and \cite{Z}. As mentioned earlier, the main idea is to use periodicity to overcome the lack of critical estimate for $v_r$ and $v_z$.

In Section 4, we will prove  Theorem \ref{thgudai}. We will apply a weighted energy method for the function
$\Gamma= r v_\theta$, exploiting the special structure of the equation and the fact that $\Gamma=0$ at the $z$ axis. It is well known that the usual energy method will run into the difficulty of
insufficient decay of solutions. The new idea of the proof lies in the construction of a special weight function, part of which is constructed explicitly and part of which is constructed by solving an auxiliary PDE. Besides, instead of $rdrdz$, we will perform energy estimates against the measure $\lambda(r)drdz$ with carefully chosen cut-off functions, which enables us to take the advantage of both the three dimensional behavior of the system near the axis and the two dimensional nature away from the axis.

In the last section, we revisit periodic ancient solutions, and focus on the steady ones. We use the method developed in Section 4 to reprove Liouville theorem for such solutions. We will also need a new observation that for $z$ periodic ancient solutions, $v_r$ converges to $0$ uniformly as $r \to \infty$.

\maketitle

\section{Local maximum estimate}
We denote $x=(r,\theta,z)\in \mathbb{R}^3$ in cylindrical coordinates. For $R>0$, we write
\[
D_R = \{x \in \mathbb{R}^3 \, |\, 0\le r <R, \theta \in \mathbb{S}^1, 0\le z<Z_0\}
\] and $P_R=D_R \times (-R^2,0]$ through out the paper. We emphasize that our choice of the cut-off function in the proof, together with the periodicity of solutions, helps us gain the crucial effect of dimension reduction.

\begin{lem}\label{lemma-21}
Assume that $\Phi\ge 0$ is a (Lipschitz) subsolution  to \eqref{Gamma/vtheta} in $\mathbb{R}^3\times (-\infty,0]$, with $b$ as in \eqref{eq-b} and bounded, i.e. $\Phi$ satisfies
\begin{equation} \label{eq-Phi}
\partial_t \Phi - \Delta \Phi + \frac{2}{r} \partial_r \Phi + b \cdot \nabla \Phi\le0.
\end{equation}
Also assume that $\Phi$ has period $Z_0$ in the $z$-direction and $\Phi\big|_{r=0}=0$. Then for any $R\ge1$, we have
$$\sup_{P_{R/2}} |\Phi | \le C \left\{ \frac{1}{R^4} \iint_{P_R} |\Phi|^2 dx ds \right\}^{\frac12},$$
where the constant $C$ does not depend on $R$.
\end{lem}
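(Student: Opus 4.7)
The plan is to run the usual parabolic Moser iteration, with three tailored modifications exploiting (i) $\Phi|_{r=0}=0$, (ii) the $z$-periodicity, and (iii) the divergence-freeness of $b$. I would pick a cutoff $\eta=\eta(r,s)$ independent of both $\theta$ and $z$, equal to $1$ on $P_{R/2}$, supported in $P_R$, with $|\partial_r\eta|\le C/R$, $|\partial_s\eta|\le C/R^2$, and $\partial_r\eta$ supported in $\{R/2\le r\le R\}$. The $z$-independence of $\eta$ is what produces the dimension-reduction effect: no cutting is needed in the periodic direction, and the singular factor $1/r$ will match against the cylindrical volume element $r\,dr\,d\theta\,dz$.

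For $p\ge 2$, I test the subsolution inequality \eqref{eq-Phi} against $\Phi^{p-1}\eta^2$ and integrate over $\mathbb{R}^3\times(-R^2,0]$. The $-\Delta\Phi$ contribution produces the standard $\frac{4(p-1)}{p^2}\iint|\nabla\Phi^{p/2}|^2\eta^2$ plus an absorbable cross term. For the singular drift $\frac{2}{r}\partial_r\Phi$, the cancellation $\frac{1}{r}\cdot r=1$ between the coefficient and the cylindrical measure yields $\frac{2}{p}\iint\partial_r(\Phi^p)\eta^2\,dr\,d\theta\,dz\,ds$; integration by parts in $r$ produces no boundary contribution (at $r=0$ by the assumption $\Phi|_{r=0}=0$, at $r=\infty$ by the cutoff), and the remaining interior term $\tfrac{4}{p}\iint\Phi^p\eta\,\partial_r\eta/r\,dx\,ds$ is dominated by $C/(pR^2)\iint_{P_R}\Phi^p$ because $\partial_r\eta$ lives only where $r\ge R/2$.

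The delicate piece is the drift $\iint(b\cdot\nabla\Phi)\Phi^{p-1}\eta^2$. Using $\mathrm{div}\,b=0$ together with the $z$-independence of $\eta$, integration by parts reduces this to $-\tfrac{2}{p}\iint\Phi^p v_r\,\eta\,\partial_r\eta$. A crude bound $|v_r|\le\|b\|_\infty$ would yield only an $O(1/R)$ contribution to the Caccioppoli coefficient, propagating to $R^{-5/4}$ after iteration, which is too weak. To upgrade to $O(1/R^2)$, I would use the representation $v_r=\partial_z\chi$ with $\chi(r,z,t):=\int_0^z v_r(r,z',t)\,dz'$; here $\chi$ is $Z_0$-periodic in $z$ (because $\int_0^{Z_0} v_r\,dz=0$, obtained by integrating the divergence-free identity $\partial_z v_z=-\frac{1}{r}\partial_r(rv_r)$ in $z$ together with $rv_r|_{r=0}=0$) and satisfies $\|\chi\|_\infty\le Z_0\|b\|_\infty$. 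Integration by parts in $z$ (no boundary by periodicity) gives $2\iint\chi\Phi^{p-1}(\partial_z\Phi)\eta\partial_r\eta$, which Young's inequality with parameter proportional to $p-1$ bounds by $\tfrac{2(p-1)}{p^2}\iint|\partial_z\Phi^{p/2}|^2\eta^2+\tfrac{C\|\chi\|_\infty^2}{R^2}\iint_{P_R}\Phi^p$, the first piece being absorbed into the Laplacian term. Collecting everything produces the Caccioppoli inequality
\[
E_p \;:=\; \sup_s\!\int\!\Phi^p\eta^2\,dx + \iint|\nabla\Phi^{p/2}|^2\eta^2\,dx\,ds \;\le\; \frac{Cp}{R^2}\iint_{P_R}\Phi^p\,dx\,ds,
\]
with $C=C(\|b\|_\infty,Z_0)$ independent of $R\ge 1$.

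Feeding this into the standard 3D parabolic Sobolev inequality $\iint g^{10/3}\le C(\sup_s\!\int g^2)^{2/3}\iint|\nabla g|^2$ applied to $g=\Phi^{p/2}\eta$, then iterating with $p_k=2(5/3)^k$ and radii $R_k\downarrow R/2$ satisfying $R_k-R_{k+1}\sim R\cdot 2^{-k}$, the usual telescoping computation with $\sum_k 1/p_k=5/4$ yields
\[
\sup_{P_{R/2}}|\Phi|\;\le\; C\,R^{-5/2}\,\|\Phi\|_{L^2(P_R)},
\]
which for $R\ge 1$ is stronger than, and therefore implies, the stated bound $\sup_{P_{R/2}}|\Phi|\le C(R^{-4}\iint_{P_R}\Phi^2)^{1/2}$. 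The principal obstacle is the drift estimate: without the stream-function representation $v_r=\partial_z\chi$, which rests crucially on both $z$-periodicity and divergence-freeness, the drift would leave an $O(1/R)$ contribution in the Caccioppoli coefficient and the iteration would produce only $R^{-5/4}$, too weak to match the mean-value form of the lemma.
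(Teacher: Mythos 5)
Your preparatory analysis is sound and matches the paper's: the $r$-and-$t$ cutoff independent of $z$, the cancellation of $\tfrac{2}{r}\partial_r$ against the cylindrical volume element using $\Phi|_{r=0}=0$, and the treatment of the drift via the $z$-primitive $\chi$ (which is exactly the shifted angular stream function $L_\theta(r,0,t)-L_\theta(r,z,t)$ used in the paper) are all correct, and the periodicity of $\chi$ follows exactly as you say from $\operatorname{div}b=0$ and $rv_r|_{r=0}=0$. The Caccioppoli inequality you write down is the one the paper obtains.

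The gap is in the Sobolev step. You invoke the standard 3D parabolic Sobolev $\iint g^{10/3}\le C(\sup_s\int g^2)^{2/3}\iint|\nabla g|^2$ for $g=\Phi^{p/2}\eta$. This inequality rests on $\|g(\cdot,t)\|_{L^6(\mathbb{R}^3)}\le C\|\nabla g(\cdot,t)\|_{L^2(\mathbb{R}^3)}$, which fails here: $g$ is $Z_0$-periodic in $z$, not compactly supported, and on a period domain $D_R$ the best one can say is $\|g\|_{L^6(D_R)}\le C(R/Z_0)^{1/3}\|\nabla g\|_{L^2(D_R)}$ (test on $g$ independent of $z$ and equal to a fixed profile of $r/R$ to see the $R^{1/3}$ is necessary). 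With the $R$-independent constant you assume, your iteration outputs $\sup_{P_{R/2}}|\Phi|\le CR^{-5/2}\|\Phi\|_{L^2(P_R)}$, which is dimensionally too strong: since $|P_R|\sim R^4$, this forces $\sup|\Phi|\lesssim R^{-1/2}\sup|\Phi|\to 0$, i.e.\ $\Phi\equiv 0$, which is not what a mean-value inequality can assert. Had you tracked the $R^{1/3}$ in the Sobolev constant, the extra $R$-powers would cancel against the $R^{-4}$ normalization and you would recover exactly the stated $R^{-2}$ bound. The paper sidesteps the issue by establishing and using the effectively two-dimensional embedding $\|f\|_{L^3(D_1)}\le C\|\nabla f\|_{L^2(D_1)}$ for $z$-periodic $f$ compactly supported in the $(x_1,x_2)$-plane (proved there by extending periodically and cutting off over $N$ periods), which under the 2D scaling $R^{-2/3}\|f\|_{L^3(D_R)}\le C\|\nabla f\|_{L^2(D_R)}$ leads to an iteration step with exponent $5/4$ rather than $5/3$ and with no stray $R$-factors. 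So: either use the paper's 2D-adapted $L^3$ embedding, or use the $L^6$ embedding with the correct $O(R^{1/3})$ constant; as written, the 3D Sobolev step is false and the concluding estimate is not attainable.
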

\begin{proof}  The main point of the lemma is that $|\Phi|$ is bounded by a constant multiple of its average
in $P_R$ for all large $R$. Due to the drift term in the equation, this is not obvious when $R$ approaches infinity. On the other hand, it is worth noticing when $R$ approaches 0, our proof will not work.

We apply Moser's iteration technique with adaptations to our periodic setting.  Set $\frac12 \le \sigma_2 < \sigma_1 \le 1$ and choose $\psi_1(r,\theta,z,s)=\phi_1(r)\eta_1(s)$ to be a smooth cut-off function defined on $P_1$ satisfying:
\begin{equation} \label{cutoff-phieta}
\begin{cases}
\text{supp}\,\phi \subset D_{\sigma_1}, \quad \phi=1\,\, \text{on}\,\, D_{\sigma_2}, \quad 0\le \phi \le 1, \\
\text{supp}\,\eta \subset (-(\sigma_1)^2,0], \quad \eta(s)=1 \,\, \text{on}\,\, (-(\sigma_2)^2,0],\quad 0 \le \eta\le 1, \\
|\eta'| \lesssim \frac{1}{(\sigma_1-\sigma_2)^2}, \quad |\nabla \phi| \lesssim \frac{1}{\sigma_1-\sigma_2}.
\end{cases}
\end{equation}
Consider the cut-off functions $\psi_R(x,s) = \phi_1(\frac{x}{R})\eta_1(\frac{s}{R^2})$. Testing \eqref{eq-Phi} by $\Phi \psi_R^2$ gives
\begin{align*}
-\frac{1}{2} \int_{-\infty}^t\int_{\mathbb{R}^3} \left( \partial_s \Phi^2 + (b\cdot \nabla) \Phi^2 + \frac{2}{r}\partial_r \Phi^2 \right) & \psi_R^2 dx ds \ge -\int_{-\infty}^t\int_{\mathbb{R}^3} (\Delta \Phi) \Phi \psi_R^2 dx ds \\
&= \int_{-\infty}^t\int_{\mathbb{R}^3} \left(|\nabla \Phi|^2 \psi_R^2 + \Phi \nabla \Phi \cdot \nabla \psi_R^2 \right) dx ds,
\end{align*}
for any $t\le 0$. Since
\begin{align*}
\int_{-\infty}^t\int_{\mathbb{R}^3} |\nabla \Phi|^2 \psi_R^2 dx ds \ge \int_{-\infty}^t\int_{\mathbb{R}^3} \left( \frac12 |\nabla(\Phi \psi_R)|^2 - \Phi^2 |\nabla \psi_R|^2 \right) dx ds,
\end{align*}
we get
\begin{align} \label{eq-21a}
\int (\Phi^2 \psi_R^2)(\cdot,t) dx + &\int_{-\infty}^t\int |\nabla (\Phi \psi_R)|^2 dxds  \le \int_{-\infty}^t\int -\left(  (b\cdot \nabla) \Phi^2 + \frac{2}{r}\partial_r \Phi^2 \right) \psi_R^2 dxds \\
&\quad+ \int_{-\infty}^t\int \left(\Phi^2 \partial_s \psi_R^2+2 \Phi^2 |\nabla \psi_R|^2 -2 \Phi \nabla\Phi \cdot \nabla \psi_R^2 \right) dxds. \nonumber
\end{align}
Now we treat the right hand side term by term. For the first term, we use $\nabla \cdot b =0$ to get
\begin{align} \label{eq-21b}
-\int_{-\infty}^t \int (b\cdot \nabla) \Phi^2 \psi_R^2 &= -\iint (v_r \partial_r \Phi^2 +v_z \partial_z \Phi^2) \psi_R^2 \nonumber \\
&=\iint (\partial_r v_r+ \frac{v_r}{r}+\partial_z v_z) \Phi^2\psi_R^2 + v_r \Phi^2 \partial_r \psi_R^2\nonumber\\
&= \iint v_r \Phi^2 \partial_r \psi_R^2.
\end{align} Here and later in the section, the integral element $dx ds$ is not written out unless there is confusion.
Let $L_\theta$ be the angular stream function which solves
\begin{align*}
\nabla \times (L_\theta e_\theta) = v_r e_r + v_z e_z,
\end{align*}
so that
\begin{align} \label{L-infty-(-1)}
v_r=-\partial_z L_\theta= \partial_z (L_\theta(r,z,t)-L_\theta(r,0,t)).
\end{align}
It is easy to check that $L_\theta$ is periodic with period $Z_0$, thus
$$|L_\theta(r,z,t)-L_\theta(r,0,t)| \le \sup|v_r(r,\cdot,t)| Z_0 \lesssim 1.$$
Hence we have
\begin{align} \label{eq-21c}
\iint v_r \Phi^2 \partial_r \psi_R^2 &= \iint -\left(L_\theta(r,z,t)-L_\theta(r,0,t)\right) \partial_z \Phi^2 \partial_r \psi_R^2\nonumber \\
&\le C \iint \Phi^2 (\partial_r \psi_R)^2 + \frac{1}{8}\iint (\partial_z \Phi)^2 \psi_R^2\nonumber \\
&\le C \iint_{P(\sigma_1 R)} \Phi^2 \frac{1}{(\sigma_2-\sigma_1)^2R^2} + \frac{1}{8}\iint (\partial_z (\Phi \psi_R))^2.
\end{align}
For the second term in \eqref{eq-21a}, using $\Phi\big|_{r=0}=0$ we get
\begin{align}\label{eq-21d}
-\iint \frac{2}{r} \partial_r \Phi^2 \psi_R^2 &= -\int_{-\infty}^t \int_0^{Z_0} \int_0^{2\pi} \int_0^{\infty} 2\partial_r \Phi^2 \psi_R^2 dr d\theta dz dt\nonumber\\
&= \int_{-\infty}^t \int_0^{Z_0} \int_0^{2\pi} \int_0^{\infty} 2 \Phi^2 \partial_r\psi_R^2 drd\theta dz dt\nonumber\\
&= \iint 2\Phi^2  \frac{\partial_r\psi_R^2}{r}\nonumber\\
&\le C \iint_{P(\sigma_1 R)} \Phi^2 \frac{1}{(\sigma_2-\sigma_1)R^2}.
\end{align}
The last three terms in (\ref{eq-21a}) are easier:
\begin{align} \label{eq-21e}
\iint \Phi^2 \partial_s \psi_R^2 + 2 \Phi^2 |\nabla \psi_R|^2 - 2\Phi \nabla\Phi \cdot \nabla \psi_R^2 &\le C \iint_{P(\sigma_1 R)} \Phi^2 \frac{1}{(\sigma_2-\sigma_1)^2R^2}\nonumber\\
&\quad + \frac{1}{8} \iint |\nabla (\Phi \psi_R)|^2.
\end{align}
Combing \eqref{eq-21a},\eqref{eq-21b},\eqref{eq-21c},\eqref{eq-21d},\eqref{eq-21e}, and using the properties of the cutoff functions \eqref{cutoff-phieta}, we arrive at
\begin{align}
\lab{energy}
\sup_{-(\sigma_2 R)^2 \le t\le 0} \|(\Phi\phi_R)(\cdot,t)\|_{L_x^2}^2 + \|\nabla (\Phi\psi_R)\|_{L_t^2 L_x^2}^2 \le C \iint_{P(\sigma_1 R)} \Phi^2 \frac{1}{(\sigma_1-\sigma_2)^2R^2}.
\end{align}

We have to use the following Sobolev embedding inequality for periodic functions:
\begin{align} \label{sobolev-embedding}
\|f\|_{L_x^3(D_{1})} \le C\|\nabla f\|_{L_x^2(D_{1})},
\end{align}
for any $f$ having period $Z_0$ in $z$ and compactly supported in the other two dimensions. To verify \eqref{sobolev-embedding}, one can argue as follows. Choose a cut-off function \begin{equation*}
g(z) = \begin{cases}
1, \quad 0<z\le NZ_0,\\
2-\frac{z}{NZ_0}, \quad NZ_0 \le z < 2NZ_0,\\
0, \quad \text{otherwise},
\end{cases}
\end{equation*}
with $N$ large. By the usual Sobolev embedding, after extending $f$ to the whole space in the periodic way along the $z$ axis, we deduce
\begin{align*}
\frac{N^\frac13}{2}\|f\|_{L^3(D_1)} \le \|fg\|_{L^3(R^3)} & \le C N^{\frac12} \|fg\|_{L^6(R^3)} \le C N^{\frac12}\|\nabla (fg)\|_{L^2(R^3)}\\
&\le C N^{\frac12}\| (\nabla f)g\|_{L^2} + CN^{\frac12} \|f(\partial_z g)\|_{L^2}\\
&\le C N\| \nabla f\|_{L^2(D_1)} + C \|f\|_{L^2(D_1)},
\end{align*}
which clearly implies \eqref{sobolev-embedding}. By scaling argument on the $x_1$ and $x_2$ direction, we have
\begin{align*}
R^{-\frac{2}{3}} \|(\Phi\psi_R)(\cdot,t)\|_{L_x^3(D_{R})} \le C\|\nabla (\Phi\psi_R)\|_{L_x^2(D_{R})}.
\end{align*}
We emphasize here that $R$ should be bounded from below, say by $1$ e.g. Interpolation of (\ref{energy}) gives
$$\left(\frac{1}{R^4} \iint_{P(\sigma_2 R)} \Phi^{\frac{5}{2}} \right)^\frac{2}{5} \le \frac{C}{\sigma_1-\sigma_2} \left(\frac{1}{R^4} \iint_{P(\sigma_1 R)} \Phi^2\right)^\frac{1}{2},$$
where $C$ does not depend on $R$.
Observe that $\Phi^{(\frac54)^k}, \quad k \ge 1$ are also positive subsolutions to \eqref{Gamma/vtheta}. Hence one can clearly repeat the above estimates to derive
$$\left(\frac{1}{R^4} \iint_{P(\sigma_{2k} R)} \Phi^{2\times(\frac54)^{k+1}} \right)^\frac{2}{5} \le \frac{C}{\sigma_{1k}-\sigma_{2k}} \left(\frac{1}{R^4} \iint_{P(\sigma_{1k} R)} \Phi^{2\times(\frac54)^k}\right)^\frac{1}{2},$$
for any $\frac{1}{2}\le\sigma_{2k}<\sigma_{1k}\le 1$. This is equivalent to
$$\left(\frac{1}{R^4} \iint_{P(\sigma_{2k} R)} \Phi^{2\times(\frac54)^{k+1}} \right)^{\frac12\times (\frac45)^{k+1}} \le \left(\frac{C}{\sigma_{1k}-\sigma_{2k}}\right)^{(\frac45)^k} \left(\frac{1}{R^4} \iint_{P(\sigma_{1k} R)} \Phi^{2\times(\frac54)^k}\right)^{\frac12\times (\frac45)^k}. $$
It remains to choose $\sigma_{1k}$ and $\sigma_{2k}$ converging to $\frac{1}{2}$ and iterate the above inequalities. This process is standard, thus omitted.
\end{proof}

\section{Liouville theorem for $\Gamma$}
In this section, we give the proof of Theorem \ref{thzhouqi} following the ideas in \cite{LZ11}, \cite{CSTY1} and \cite{Z}. Without loss of generality, let us assume $Z_0=1$ for simplicity in this section.
Assume that $\Phi$ is a positive periodic solution to \eqref{Gamma/vtheta} in $P_R=D_R \times (-R^2,0]$. We also assume that $\Phi\big|_{r=0} \ge \frac12$.

We denote $\Psi= -\ln \Phi$. Let us first carry out a standard analysis on $\Psi$, using $\nabla \cdot b =0$ and $v_r \in (L^\infty)^{-1}$. The equation for $\Psi$ reads
\begin{equation} \label{eq-Psi}
\partial_t \Psi + b \cdot \nabla \Psi + \frac{2}{r} \partial_r \Psi - \Delta \Psi + |\nabla \Psi|^2=0.
\end{equation}
Choose cut-off functions $\zeta_R(r,\theta,z)=\zeta_1(\frac{r}{R})$ such that
\begin{equation} \label{cutoff-zeta}
\begin{cases}
\zeta_R=1,\quad \text{for}\,\, x\in D_{R/2},\\
|\partial_r \zeta_R| \lesssim \frac{1}{R},\,\, \partial_\theta \zeta_R=\partial_z \zeta_R=0.
\end{cases}
\end{equation}
By testing \eqref{eq-Psi} with $\zeta_R^2$ and integrating on $D_R$ we get
\begin{align*}
\partial_t \int_{D_R} \Psi \zeta_R^2 dx + \int_{D_R} |\nabla \Psi|^2 \zeta_R^2 dx &= \int_{D_R} -b \cdot \nabla \Psi \zeta_R^2 - \frac{2}{r} \partial_r \Psi \zeta_R^2 - \nabla \Psi \cdot \nabla \zeta_R^2 \,\,dx\\
&\le \int_{D_R} -b \cdot \nabla \Psi \zeta_R^2 - \frac{2}{r} \partial_r \Psi \zeta_R^2 + \frac{1}{6}|\nabla \Psi|^2 \zeta_R^2 + C |\nabla \zeta_R|^2 \,\,dx \nonumber.
\end{align*}
Using \eqref{cutoff-zeta} we have
\begin{align}\label{eq-3a}
\partial_t \int \Psi \zeta_R^2 dx + \frac{5}{6}\int |\nabla \Psi|^2 \zeta_R^2 dx & \le C + \int \left(-b \cdot \nabla \Psi - \frac{2}{r} \partial_r \Psi \right)\zeta_R^2 dx.
\end{align}
The drift term involving $b$ can be estimated in the spirit of \eqref{eq-21b} and \eqref{eq-21c},
\begin{align}\label{eq-3b}
\int -b \cdot \nabla \Psi \zeta_R^2 &= \int (v_r \partial_r \zeta_R^2+ v_z \partial_z \zeta_R^2) \Psi\nonumber \\
&=-\int \partial_z (L_\theta(r,z,t)-L_\theta(r,0,t)) \partial_r \zeta_R^2 \Psi\nonumber\\
&=\int (L_\theta(r,z,t)-L_\theta(r,0,t)) \partial_r \zeta_R^2 \partial_z \Psi\nonumber\\
&\le C+\frac{1}{6} \int |\nabla \Psi|^2 \zeta_R^2.
\end{align}  Here we just used $| D_R | \sim R^2$ for large $R$.

To proceed, we need the weighted Poincar\'e inequality in our periodic domain $D_R\,(R\ge 1)$
\begin{align} \label{weighted-poincare}
\int_{D_R} |\Psi-\bar{\Psi}|^2\zeta_R^2 dx \le C R^2 \int_{D_R} |\nabla \Psi|^2 \zeta_R^2 dx,
\end{align}
where
\[
\bar{\Psi} = \left( \int \zeta_R^2 dx \right)^{-1} \int \Psi\zeta_R^2 dx.
\]To check this we first use the usual weighted Poincar\'e inequality in 2 dimensions to deduce
\begin{align*}
\int_{D_R} |\Psi-[\Psi](z)|^2\zeta_R^2 dx \le C R^2 \int_{D_R} |\nabla \Psi|^2 \zeta_R^2 dx,
\end{align*}
where
\[
[\Psi](z)=\left( \iint \zeta_R^2 rdr d\theta \right)^{-1}\iint \Psi \zeta_R^2 rdr d\theta.
\] Moreover, since $[\Psi]$ depends only on $z$ and $\bar{\Psi} = Z^{-1}_0 \int^{Z_0}_0 [\Psi](z) dz$, we have
\begin{align*}
\int_{D_R} |[\Psi]-\bar{\Psi}|^2 \zeta_R^2 dx &\le C R^2 \int_0^{Z_0} |[\Psi]-\bar{\Psi}|^2 dz\\
 &\le C R^2 \left(\int_0^{Z_0} |\partial_z [\Psi]| dz \right)^2 \\
&\le \frac{C}{R^2} \left(\int_{D_R} |\partial_z \Psi| \zeta_R^2 dx\right)^2 \\
&\le C \int_{D_R} |\partial_z \Psi|^2 \zeta_R^2 dx.
\end{align*} Here we have used a 1 dimensional Sobolev imbedding going from line 1 to line 2.
This proves \eqref{weighted-poincare}. Now integration by parts and \eqref{weighted-poincare} give
\begin{align} \label{eq-3c}
-\int \frac{2}{r} \partial_r \Psi \zeta_R^2 dx &= - 2\iiint \partial_r \Psi \zeta_R^2 dr d\theta dz \nonumber\\
&= 2\iint (\Psi-\bar{\Psi}) \zeta_R^2 d\theta dz \big|_{r=0}   + 2 \iiint (\Psi-\bar{\Psi}) \partial_r \zeta_R^2 dr d\theta dz\nonumber\\
&= C-C \bar{\Psi} +2\int (\Psi-\bar{\Psi}) \frac{\partial_r \zeta_R^2}{r} dx\nonumber\\
&\le C-C \bar{\Psi}+ \frac{1}{6} \int |\nabla \Psi|^2 \zeta_R^2 + C R^2 \int \left(\frac{\partial_r \zeta_R}{r}\right)^2 dx \nonumber\\
&\le C-C \bar{\Psi}+ \frac{1}{6} \int |\nabla \Psi|^2 \zeta_R^2 dx.
\end{align}
Hence, from \eqref{eq-3a},\eqref{eq-3b},\eqref{eq-3c},  we get a crucial differential inequality:
\begin{align} \label{inequality-Psi}
\partial_t \int \Psi \zeta_R^2 dx + C_1 \bar{\Psi} \le -\frac{1}{2} \int |\nabla \Psi|^2 \zeta_R^2 dx + C_2,
\end{align}
for $t\in [-R^2,0]$ and $C_1,\, C_2>0$ independent of $R$. At this point, we claim that the following lemma holds, since the same arguments in \cite{LZ11} apply to our situation with some adjustments on the region of integration.

\begin{lem} \label{lemma-31}
Let $\Phi \le 1$ be a positive z-periodic solution to \eqref{Gamma/vtheta} in $P_R ( R\ge 1)$ which satisfies
\begin{align} \label{lower-bound-l1}
\|\Phi\|_{L^1(P(\frac{R}{2}))}\ge \kappa R^4,
\end{align}
for some $\kappa >0$. Moreover we assume that $\Phi\big|_{r=0}\ge \frac{1}{2}$. Then there holds
\begin{align} \label{lemma-31-conclusion}
-\int \zeta_R^2(x) \ln \Phi(x,t) dx \le M R^2,
\end{align}
for all $t\in [-\frac{\kappa R^2}{4},0]$ and some positive constant $M$ depending only on $\kappa$.
\end{lem}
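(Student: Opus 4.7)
The plan is to adapt the two-step procedure of \cite{LZ11}: use the $L^1$ lower bound \eqref{lower-bound-l1} to anchor $F(t) := \int \zeta_R^2 \Psi(\cdot,t)\, dx$ (where $\Psi = -\ln \Phi \ge 0$) at some past time $t_0 \in [-R^2/4, -\kappa R^2/4]$, and then propagate the anchor forward via the differential inequality \eqref{inequality-Psi}. For the propagation step, since $\bar\Psi = F/W$ with $W := \int \zeta_R^2 dx \sim R^2$, discarding the nonnegative gradient term in \eqref{inequality-Psi} yields the scalar ODE inequality
$$F'(t) + \lambda F(t) \le C_2, \qquad \lambda := C_1/W \sim R^{-2}.$$
Gronwall's inequality then gives, for $t \in [t_0, 0]$, the bound $F(t) \le F(t_0) e^{-\lambda(t - t_0)} + C_2/\lambda$; since $\lambda(t-t_0) \lesssim 1$ and $C_2/\lambda = O(R^2)$, it suffices to produce an anchor $F(t_0) \lesssim R^2$.

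To locate $t_0$, we exploit the fact that $\Phi \le 1$ together with $\|\Phi\|_{L^1(P(R/2))} \ge \kappa R^4 \sim \kappa |P(R/2)|$: a Chebyshev-type argument gives $|\{(x,t) \in P(R/2) : \Phi \ge \kappa/2\}| \gtrsim \kappa R^4$, and Fubini then produces a set $T^* \subset [-R^2/4, 0]$ of linear measure $\gtrsim \kappa R^2$ on which the spatial slice $E_t := \{x \in D_{R/2} : \Phi(x,t) \ge \kappa/2\}$ has $|E_t| \gtrsim \kappa R^2$. A direct comparison of measures (since $|T^*| > \kappa R^2/4 = |(-\kappa R^2/4, 0]|$) shows $T^* \cap [-R^2/4, -\kappa R^2/4] \ne \emptyset$, so one may pick $t_0$ in this intersection; on $E_{t_0}$ one then has $\Psi(\cdot, t_0) \le \ln(2/\kappa)$ and $\zeta_R \equiv 1$.

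The hard part will be converting this pointwise information on $E_{t_0}$ into the integral bound $F(t_0) \lesssim R^2$. Following \cite{LZ11}, one argues by contradiction: if $\bar\Psi(t_0) \gg \ln(2/\kappa)$, then on $E_{t_0}$ the difference $\Psi - \bar\Psi$ is uniformly negative of size $\gtrsim \bar\Psi(t_0)$, so combining with $|E_{t_0}| \gtrsim \kappa R^2$ and the weighted Poincar\'e inequality \eqref{weighted-poincare}---valid in our periodic cylinder thanks to the one-dimensional Sobolev embedding used earlier---one obtains a quantitative lower bound $\int|\nabla \Psi|^2 \zeta_R^2\, dx\big|_{t=t_0} \gtrsim \bar\Psi(t_0)^2$. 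Time-integrated versions of \eqref{inequality-Psi} on a short interval adjacent to $t_0$, combined with $F \ge 0$ and the nonnegativity of $\bar\Psi$, bound this Dirichlet energy by $O(1)$ in an averaged sense, and a contradiction follows unless $\bar\Psi(t_0) \le M_0$. The only adaptation needed relative to \cite{LZ11} is that the spatial domain is the periodic cylinder $D_R$ rather than a Euclidean ball, which is absorbed once \eqref{weighted-poincare} is in hand.
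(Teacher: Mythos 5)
Your overall plan (use the $L^1$ bound to produce a ``good'' time $t_0$, bound $\bar\Psi(t_0)$ there, then propagate forward via \eqref{inequality-Psi}) is in the right spirit and shares many ingredients with the paper's proof: the weighted Poincar\'e inequality \eqref{weighted-poincare}, the differential inequality \eqref{inequality-Psi}, a Chebyshev/Fubini decomposition of $P_{R/2}$ in time, and the one-sided near-monotonicity $\bar\Psi(s_2)\le\bar\Psi(s_1)+C_2/C_1$ implicit in your Gronwall step. The difficulty, and where your sketch breaks, is the ``hard part'' of anchoring $\bar\Psi(t_0)\lesssim 1$.

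The issue is that a lower bound on the Dirichlet energy at the \emph{single} time $t_0$ cannot, on its own, bound $\bar\Psi(t_0)$. From \eqref{inequality-Psi} at $t=t_0$ your lower bound $\int|\nabla\Psi|^2\zeta_R^2\,dx\big|_{t_0}\gtrsim\kappa\,\bar\Psi(t_0)^2$ (note the $\kappa$ factor that your sketch drops, coming from $|E_{t_0}|\gtrsim\kappa R^2$) only yields $\partial_t F(t_0)\le -c\kappa\bar\Psi(t_0)^2+C_2-C_1\bar\Psi(t_0)$, i.e.\ $F$ is decreasing fast at $t_0$. That is not a contradiction. Your remedy, ``time-integrated versions of \eqref{inequality-Psi} on a short interval adjacent to $t_0$,'' does not close the gap either: integrating backward from $t_0$ needs control of $F(t_0-\tau)$, which you do not have; integrating forward from $t_0$, the Dirichlet energy lower bound requires $|E_t|\gtrsim\kappa R^2$, which holds only for $t$ in the set you called $T^*$, and you have no information on how $T^*$ sits relative to the short interval $[t_0,t_0+\delta]$. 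In general the ``good'' time set $T^*$ is just a measurable set of measure $\gtrsim\kappa R^2$; it need not contain any full interval adjacent to your chosen $t_0$.

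The paper resolves this precisely by not arguing at a single time. It introduces a dichotomy on the whole window $[-R^2/4,-\kappa R^2/4]$: either $\bar\Psi(s)$ is below an explicit threshold for some $s$ there (in which case $\bar\Psi(s_2)\le\bar\Psi(s_1)+C_2/C_1$ carries the bound to all later times), or $\bar\Psi(s)$ exceeds the threshold for \emph{every} $s$ in the window. In the second case Nash's inequality (Lemma~\ref{lemma-nash}) converts the Poincar\'e bound into the Riccati inequality $aR^2\,\partial_t\bar\Psi\le -C_4\,\chi(t)\,\bar\Psi^2$, where $\chi$ is the indicator of $W=\{s:\int_{D_{R/2}}\Phi(s)\ge\tfrac{\kappa}{2}R^2\}$. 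The decisive point is that $|W\cap[-R^2/4,-\kappa R^2/4]|\ge\kappa R^2/4$, so integrating the Riccati ODE over this time set --- not over a single instant, and not over an interval adjacent to $t_0$ --- produces $1/\bar\Psi(-\kappa R^2/4)\ge\frac{C_4}{aR^2}\int\chi\gtrsim\kappa$, hence an absolute upper bound on $\bar\Psi(-\kappa R^2/4)$. In short: you need to integrate the Riccati decay over the measure-$\gtrsim\kappa R^2$ good time set $W$, not anchor at one time and hope the Dirichlet estimate at that instant controls $\bar\Psi$ there. Replacing your anchoring step by the paper's $W$-integrated Riccati argument makes the rest of your plan go through.
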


\begin{proof}

For the sake of completeness, we present the proof here. Note that
\[
d\mu=\frac{1}{R^2} (\int \zeta_1^2 dx)^{-1} \zeta_R^2 dx
\] is a probability measure.
By Nash's inequality(see Lemma \ref{lemma-nash} below) and the weighted Poincar\'e inequality \eqref{weighted-poincare}, and since $\Psi = - \ln \Phi$,
\begin{align}\label{eq-31a}
\left|\ln\left(\int_{D_R}\Phi d\mu\right) + \int_{D_R} \Psi d\mu \right|^2 \left(\int_{D_R} \Phi d\mu\right)^2 &\le |\sup \Phi|^2  \int_{D_R} \left|\Psi -\bar{\Psi}\right|^2 d\mu\nonumber\\
&\le C_3 \int_{D_R} |\nabla \Psi|^2 \zeta_R^2 dx.
\end{align}
For simplicity we write $a=\int \zeta_1^2 dx>0$. Plugging \eqref{eq-31a} into \eqref{inequality-Psi} gives
\begin{align}\label{eq-31b}
aR^2 \partial_t \bar{\Psi}(t) + C_1 \bar{\Psi}(t) \le C_2 - \frac{1}{2C_3} \left|\ln\int_{D_R}\Phi d\mu + \int_{D_R} \Psi d\mu \right|^2 \left(\int_{D_R} \Phi d\mu\right)^2.
\end{align}
Now we consider the set
\begin{align*}
W=\{s\in [-\frac14 R^2,0]:\int_{D_\frac{R}{2}} \Phi(s) dx \ge \frac{\kappa}{2} R^2\},
\end{align*}
and denote its characteristic function by $\chi(s)$. Due to the condition \eqref{lower-bound-l1}, we have
\begin{align*}
\kappa R^4 \le \int_{P_{R/2}} \Phi dx dt &= \int_W \int_{D_{R/2}} \Phi(s) dx ds + \int_{[-R^2/4,0]- W} \int_{D_{R/2}} \Phi(s) dx ds \\
&\le |W||D_{R/2}| \sup_{D_{R/2}}|\Phi|  + \frac{R^2}{4} \frac{\kappa R^2}{2}\\
&\le \frac{\pi R^2}{4} |W| + \frac{\kappa R^4}{8}.
\end{align*}
This gives
\begin{align} \label{lower-bound-|W|}
|W| \ge \frac{\kappa R^2}{2}.
\end{align}
From $aR^2 \partial_t \bar{\Psi} + C_1\bar{\Psi} \le C_2$, it is easy to derive that
\begin{align} \label{eq-31c}
\bar{\Psi}(s_2) \le \bar{\Psi}(s_1) + \frac{C_2}{C_1},
\end{align}
for any $-\frac{R^2}{4}\le s_1\le s_2 \le 0$. If for some $\frac{-R^2}{4} \le s\le \frac{-\kappa R^2}{4}$, there holds
$$\bar{\Psi}(s) \le 2\left|\ln \frac{\kappa}{2a}\right|+\frac{8a\sqrt{C_2C_3}}{\kappa},$$
then due to \eqref{eq-31c}, the conclusion \eqref{lemma-31-conclusion} holds with
\begin{equation*}
M=a\left(2\left|\ln \frac{\kappa}{2a}\right|+\frac{8a\sqrt{C_2C_3}}{\kappa} + \frac{C_2}{C_1}\right).
\end{equation*}
Otherwise, for all $-\frac{R^2}{4}\le s_1\le s_2 \le 0$ we have
$$\bar{\Psi}(s) \ge 2\left|\ln \frac{\kappa}{2a}\right| +\frac{8a\sqrt{C_2C_3}}{\kappa}.$$
Since for $s\in W\cap [-\frac{R^2}{4},-\frac{\kappa R^2}{4}]$, one has
$$\ln \int_{D_R} \Phi(s) d\mu \ge \ln \int_{D_{R/2}} \Phi(s) d\mu \ge \ln\frac{\kappa}{2a}.$$
In this case, \eqref{eq-31b} and $\Psi \ge 0$ gives
\begin{equation} \label{eq-31d}
aR^2 \partial_t \bar{\Psi}(t) \le a R^2 \partial_t \bar{\Psi}(t) + C_1 \bar{\Psi}(t) \le - C_4 \chi(t) \bar{\Psi}(t)^2,
\end{equation}
for $t \in [-\frac{R^2}{4},-\kappa\frac{R^2}{4}]$. Note that \eqref{lower-bound-|W|} implies
\begin{align*}
\int_{-R^2/4}^{-\kappa R^2/4} \chi(s) ds \ge \frac{\kappa R^2}{4}.
\end{align*}
Solving the Riccati type equation \eqref{eq-31d} clearly gives an absolute upper bound
for $\bar{\Psi}(-\frac{\kappa R^2}{4})$. See \cite{LZ11} Lemma 3.2 for details. The conclusion \eqref{lemma-31-conclusion} follows immediately by \eqref{eq-31c}.
\end{proof}

The Nash inequality used earlier can be found in \cite{CSTY2}. We give an easier proof here.
\begin{lem}\label{lemma-nash}
Let $\mu$ be a probability measure. Then for any integrable function $\Phi >0$ we have
\begin{equation} \label{nash-inequality}
\left| \ln \left( \int \Phi d\mu \right) - \int \ln \Phi d\mu \right| \left(\int \Phi d\mu\right) \le (\sup \Phi) \int \left|\ln \Phi-\int \ln \Phi d\mu \right| d\mu.
\end{equation}
\end{lem}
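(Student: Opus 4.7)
The plan is to read the inequality as a controlled form of Jensen's inequality. Set $A = \int \Phi \, d\mu$ and $c = \int \ln \Phi \, d\mu$. Since the function $\varphi(x) = x \ln x$ is convex on $(0,\infty)$, Jensen's inequality applied to $\Phi$ and the probability measure $\mu$ yields
$$A \ln A \le \int \Phi \ln \Phi \, d\mu.$$
Subtracting $cA = c \int \Phi \, d\mu$ from both sides gives
$$A(\ln A - c) \le \int \Phi (\ln \Phi - c) \, d\mu.$$
Moreover, applying Jensen a second time, now to the concave function $\ln$, gives $\ln A \ge c$, so the left-hand side is nonnegative and equals $A \, |\ln A - c|$; this is exactly the quantity on the left of \eqref{nash-inequality}.

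For the right-hand side I would split the integrand by the sign of $\ln \Phi - c$. Because $c = \int \ln \Phi \, d\mu$, one has $\int (\ln \Phi - c) \, d\mu = 0$, so
$$\int (\ln \Phi - c)^+ \, d\mu \;=\; \int (\ln \Phi - c)^- \, d\mu \;=\; \tfrac{1}{2} \int |\ln \Phi - c| \, d\mu.$$
On the set where $\ln \Phi \ge c$ use the pointwise bound $\Phi \le \sup \Phi$, while on the complementary set the integrand $\Phi(\ln \Phi - c)$ is nonpositive and is simply discarded. This produces
$$\int \Phi (\ln \Phi - c) \, d\mu \;\le\; (\sup \Phi) \int (\ln \Phi - c)^+ \, d\mu \;=\; \tfrac{1}{2} (\sup \Phi) \int |\ln \Phi - c| \, d\mu,$$
which, combined with the first step, yields the stated inequality with an extra factor of $\tfrac12$ to spare.

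There is no real obstacle: the argument is a two-line application of Jensen in both convex and concave form, together with a positive/negative-part decomposition, which is presumably why the authors advertise it as easier than the one in \cite{CSTY2}. The only point to watch is to apply the two Jensen inequalities in their correct directions, so that the quantity $A(\ln A - c)$ can indeed be identified with an absolute value and legitimately compared to the right-hand side of \eqref{nash-inequality}.
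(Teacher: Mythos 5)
Your argument is correct and is essentially the paper's proof: both use Jensen for the concave function $\ln$ to get $\ln A \ge c$ and Jensen for the convex function $x\ln x$ to get $A\ln A \le \int \Phi\ln\Phi\,d\mu$, then bound the result by $\sup\Phi$ times the $L^1$ oscillation of $\ln\Phi$. The one genuine sharpening you introduce is exploiting $\int(\ln\Phi - c)\,d\mu = 0$ to split the integral evenly by sign, producing the extra factor of $\tfrac12$; the paper instead uses the cruder pointwise bound $\Phi\ln\Phi \le (\sup\Phi)|\ln\Phi|$ (after normalizing $c=0$), which forgoes that constant but suffices for the application.
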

\begin{proof}
After multiplying $\Phi$ by a constant, one may assume that $\int \ln \Phi d\mu=0$. In this case, Jensen's inequality gives
\begin{equation*}
\ln \int \Phi d\mu \ge \int \ln \Phi d\mu = 0.
\end{equation*}
For the convex function $f(\alpha)=\alpha\ln \alpha$, using Jensen's inequality again we get
\begin{align*}
 \ln  \left(\int \Phi d\mu \right)  \left(\int \Phi d\mu\right) \le \int \Phi \ln \Phi d\mu \\
 \le (\sup \Phi) \int |\ln \Phi| d\mu,
\end{align*}
which proves \eqref{nash-inequality}.
\end{proof}

We shall need another auxiliary lemma giving a lower bound of $\int_{P_R} \Phi\, dx dt$, which makes Lemma \ref{lemma-31} applicable.

\begin{lem}\label{lemma-32}
Let $\Phi$ be a nonnegative z-periodic solution(with period $Z_0=1$ in the $z$ direction) to \eqref{Gamma/vtheta} in $P_R\,\, (R\ge 1)$, satisfying
$$\Phi\big|_{r=0} \ge \frac{1}{2}.$$
Then
\begin{equation}\label{lemma-32-conclusion}
\|\Phi \|_{L^1(P_R)} \ge \kappa R^4,
\end{equation}
for some absolute constant $\kappa > 0$ independent of R.
\end{lem}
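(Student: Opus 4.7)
The plan is to derive the $L^1$ lower bound via a mass-type identity, where the pointwise lower bound $\Phi|_{r=0} \ge 1/2$ supplies a positive ``source'' through the axis. Concretely, test \eqref{Gamma/vtheta} against a smooth radial cutoff $\zeta_R(r)$ equal to $1$ on $r \le R/2$ and supported in $r \le R$, and integrate over $D_R$. Since $dx = r\,dr\,d\theta\,dz$, the measure factor $r$ exactly cancels the $1/r$ singularity in the term $\tfrac{2}{r}\partial_r\Phi$; a careful integration by parts in $r$ then extracts from this term an axis boundary contribution $4\pi\zeta_R(0)\int_0^{Z_0}\Phi(0,z,t)\,dz \ge 2\pi Z_0$ (positive thanks to the hypothesis on $\Phi|_{r=0}$), while the Laplacian term produces no axis boundary. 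The identity takes the form
\begin{equation*}
\partial_t \int \Phi\,\zeta_R\,dx \;\ge\; 2\pi Z_0 \;+\; T_1 \;+\; T_2 \;+\; T_3,
\end{equation*}
with $T_1 = \int \Phi\, v_r\, \partial_r\zeta_R\,dx$ and $T_2, T_3$ analogous error terms supported in the annulus $R/2 \le r \le R$.

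The main difficulty is that the naive estimate $|T_1| \le \|v\|_\infty\|\Phi\|_\infty \int|\partial_r\zeta_R|\,dx$ yields $O(R)$, which would dwarf the fixed axis source. To overcome this, apply precisely the same periodicity-based stream-function trick used in Lemma \ref{lemma-21}: write $v_r = -\partial_z \tilde L_\theta$ with $\tilde L_\theta(r,z,t) = L_\theta(r,z,t) - L_\theta(r,0,t)$ satisfying $|\tilde L_\theta| \le Z_0\|v_r\|_\infty$. Integration by parts in $z$, legitimate by periodicity, converts $T_1$ into $\int \tilde L_\theta\,\partial_z\Phi\,\partial_r\zeta_R\,dx$, which by Young's inequality is bounded by $\tfrac14 \int |\partial_z\Phi|^2 \zeta_R^2\,dx + C$. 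The companions $T_2$ and $T_3$ are immediately $O(1)$ under the normalization $\|\Phi\|_\infty \le 1$, which is the one naturally available when this lemma is applied within the De Giorgi--Nash--Moser scheme.

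To handle the leftover $|\partial_z\Phi|^2$ contribution, pair the mass identity with a companion energy inequality obtained by testing against $\Phi\zeta_R^2$; the analogous axis integration by parts yields an additional positive source $\pi Z_0/2$ coming from $\Phi^2|_{r=0} \ge 1/4$, giving
\begin{equation*}
\tfrac12\partial_t \int \Phi^2\zeta_R^2\,dx + \int |\nabla\Phi|^2\zeta_R^2\,dx \;\ge\; \tfrac{\pi Z_0}{2} - C.
\end{equation*}
A suitable positive combination of the mass and energy inequalities absorbs $\int |\partial_z\Phi|^2\zeta_R^2$ into $\int|\nabla\Phi|^2\zeta_R^2$, producing
\begin{equation*}
\partial_t \Bigl( \int \Phi\,\zeta_R\,dx + \tfrac14 \int \Phi^2\zeta_R^2\,dx \Bigr) \;\ge\; c_0
\end{equation*}
for a positive $c_0$ depending only on $\|v\|_\infty$ and $Z_0$. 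Since the bracketed quantity is nonnegative at $t = -R^2$, integration from $-R^2$ to $t$ gives $\int \Phi\,\zeta_R\,dx\big|_t \ge c_0(t+R^2)$, and a second integration over $t \in [-R^2,0]$ yields $\|\Phi\|_{L^1(P_R)} \ge \tfrac{c_0}{2}\,R^4$.

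The main obstacle is thus the control of $T_1$: without the stream-function trick, this term grows linearly in $R$ and swamps the fixed-size axis source, and no refinement of the cutoff alone can recover this. The periodicity assumption enters at two places, namely the boundedness of $\tilde L_\theta$, and the vanishing of $z$-boundary terms in the integration by parts; both are essential, and both mirror the key mechanism that made Lemma \ref{lemma-21} work.
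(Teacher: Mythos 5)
Your overall strategy — extract a positive source at the axis from the $\tfrac{2}{r}\partial_r$ term via integration by parts, and defuse the drift term through the periodicity-based stream-function substitution $v_r = -\partial_z \tilde L_\theta$ followed by Young — is genuinely different from the paper's. The paper instead tests \eqref{Gamma/vtheta} against a single space-time weight $\tfrac{1}{2\sqrt{\Phi}}\psi_R^2$ (not $\zeta_R^2$ or $\Phi\zeta_R^2$), with $\psi_R$ compactly supported in $P_R$ and identically $1$ on $D_{R/2}\times[-\tfrac34 R^2,-\tfrac14 R^2]$. The inverse square-root is the crucial point: the test function produces the coercive term $4\int|\nabla\Phi^{1/4}|^2\psi_R^2$ on the favorable side, and after rewriting $\partial_z\sqrt{\Phi}=2\Phi^{1/4}\partial_z\Phi^{1/4}$, the drift term bounds by $\int|\nabla\Phi^{1/4}|^2\psi_R^2 + C\int\sqrt{\Phi}\,|\partial_r\psi_R|^2$, so the Young error is absorbed exactly and the remaining errors all have the same shape $\int\sqrt{\Phi}\cdot O(R^{-2})$ as the terms in the final bound. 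The proof ends with one H\"older inequality to pass from $\int\sqrt{\Phi}\gtrsim R^4$ to $\|\Phi\|_{L^1(P_R)}\gtrsim R^4$, and it does not require the normalization $\|\Phi\|_\infty\le 1$ that your argument needs.

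The gap in your proposal is the ``combination'' step, which I do not think closes. In your mass identity the drift term, after the stream-function substitution and Young with parameter $\delta$, contributes a deficit of the form $-\delta\int|\partial_z\Phi|^2\zeta_R^2 - \tfrac{C_1}{\delta}$, where $C_1 \sim \|\tilde L_\theta\|_\infty^2\int|\partial_r\zeta_R|^2\,dx \sim Z_0^3\|v_r\|_\infty^2$ is a fixed constant independent of $R$ (because $\int|\partial_r\zeta_R|^2\,r\,dr\,d\theta\,dz = O(1)$ for any annular cutoff). This constant competes directly with the axis source $2\pi Z_0$, and no choice of $\delta$ makes $2\pi Z_0 - C_2 - C_1/\delta$ positive once $\|v_r\|_\infty$ is moderately large. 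Your companion energy identity reads $\tfrac12\partial_t\int\Phi^2\zeta_R^2 + \int|\nabla\Phi|^2\zeta_R^2 = (\text{axis}) + (\text{errors})$, so the gradient integral sits on the \emph{same} side as $\partial_t\int\Phi^2\zeta_R^2$; adding the energy inequality to the mass inequality therefore \emph{subtracts} $\int|\nabla\Phi|^2\zeta_R^2$ from the lower bound rather than adding it, making the deficit worse. (Subtracting a multiple $\alpha$ of the energy does move $\alpha\int|\nabla\Phi|^2\zeta_R^2$ to the favorable side, but then one needs $\delta\le\alpha/2$, and the resulting bound $2\pi Z_0(1-\alpha)-C_2-\tfrac{2C_1}{\alpha}-\alpha C_3$ is still negative for all $\alpha\in(0,1)$ whenever $C_1\gg Z_0$.) So the displayed conclusion $\partial_t\bigl(\int\Phi\zeta_R + \tfrac14\int\Phi^2\zeta_R^2\bigr)\ge c_0>0$ is not justified. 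This is not a technicality: the two test functions you chose, $\zeta_R^2$ and $\Phi\zeta_R^2$, produce Young errors that are absolute constants, whereas the paper's $\Phi^{-1/2}\psi_R^2$ converts the same error into a multiple of the coercivity term it is being measured against. The inverse power on $\Phi$ is what makes the absorption scale-free, and without something like it the argument breaks.
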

\begin{proof}
Consider cut-off functions $\psi=\psi_R(x,t)$ compactly supported on $P_R$, satisfying
\begin{equation} \label{cutoff-psi}
\begin{cases}
\psi_R = 1, \quad \text{for}\quad (x,t) \in D_{R/2} \times [-\frac34 R^2, -\frac14 R^2], \\
\partial_z \psi_R =0 , \quad |\nabla \psi_R| \lesssim \frac{1}{R},\\
|\partial_t \psi_R|,\,\, |\nabla^2 \psi_R| \lesssim \frac{1}{R^2}.
\end{cases}
\end{equation} For simplicity of presentation, we will drop the index $R$ in $\psi_R$ unless stated otherwise.
Let us test \eqref{Gamma/vtheta} by $\frac{1}{2\sqrt{\Phi}}\psi_R^2$ in the domain $P_R$:
\begin{align}\label{eq-32a}
\int_{P_R}  -\sqrt{\Phi} \partial_t \psi^2 + \frac{2}{r} \partial_r(\sqrt{\Phi}) \psi^2 + b \cdot \nabla\sqrt{\Phi} \psi^2&= \int_{P_R}  \Delta \Phi \frac{\psi^2}{2\sqrt{\Phi}}\nonumber\\
&= \int_{P_R}  \sqrt{\Phi} \Delta \psi^2 + 4\int_{P_R}  |\nabla (\Phi^\frac14)|^2 \psi^2.
\end{align}
The singular drift term can be estimated similarly as before,
\begin{align} \label{eq-32b}
\int \frac{2}{r} \partial_r \sqrt{\Phi} \psi^2 &= -2 \iiint \sqrt{\Phi}\big|_{r=0} \psi^2 d\theta dz dt - 2 \int  \sqrt{\Phi} \frac{\partial_r \psi^2}{r}\nonumber\\
&\le - \kappa_1 R^2 -2\int \sqrt{\Phi} \frac{\partial_r \psi^2}{r},
\end{align}
where $\kappa_1$ is a positive constant. Then we again use $\nabla \cdot b =0$ and $v_r = - \partial_z(L_\theta-L_\theta(r,0,t))$ to get
\begin{align} \label{eq-32c}
\int b \cdot \nabla\sqrt{\Phi} \psi^2 &= -\int v_r \sqrt{\Phi} \partial_r \psi^2 = \int (L_\theta-L_\theta(r,0,t)) \partial_z \sqrt{\Phi} \partial_r \psi^2 \nonumber\\
&\le \int |\nabla (\Phi^\frac14)|^2 \psi^2 + C\int \sqrt{\Phi} (\partial_r \psi)^2.
\end{align}
We plug \eqref{eq-32b}, \eqref{eq-32c} into \eqref{eq-32a} to get
\begin{align*}
\int_{P_R} \sqrt{\Phi} (-\partial_t \psi^2 -2\frac{\partial_r \psi^2}{r} + C (\partial_r \psi)^2 -\Delta \psi^2) \ge \kappa_1 R^2.
\end{align*}
Due to \eqref{cutoff-psi} we have
\begin{align*}
\int_{P_R} \sqrt{\Phi} \frac{1}{R^2} \ge \kappa_2 R^2,
\end{align*}
for some positive constant $\kappa_2$ independent of $R$. It remains to conclude \eqref{lemma-32-conclusion} using H\"older's inequality.
\end{proof}

Now we are ready to give:
\begin{proof}[Proof of Theorem \ref{thzhouqi}]
Let us work with $|\Gamma|\le 1$ and $Z_0=1$. Since the Liouville theorem for no swirl case has been proved in \cite{KNSS}, it suffices to prove that $\Gamma \equiv 0$. Consider the domain $P_R=D_R\times (-R^2,0]$ for some $R\ge 1$.

We may assume that
$$\sup_{P_R} \Gamma \le -\inf_{P_R} \Gamma.$$
Otherwise consider $-\Gamma$. Let
$$\Phi = \frac{\Gamma - \inf_{P_R} \Gamma}{\sup_{P_R} \Gamma - \inf_{P_R} \Gamma}.$$
Then $0\le \Phi \le 1$ and $\Phi\big|_{r=0}\ge \frac12$. By Lemma \ref{lemma-31} and Lemma \ref{lemma-32} we deduce that for all $t\in [-\frac{\kappa R^2}{4},0],$
$$-\int \zeta_R^2(x) \ln \Phi(x,t) dx \le M R^2.$$
By Chebyshev's inequality, for any $0<\delta<1$ and $t\in [-\frac{\kappa R^2}{4},0]$,
\begin{equation} \label{eq-11a}
|\{x\in D_{R/2}: \Phi(x,t)\le \delta\}| \le \frac{M R^2}{|\ln \delta|}.
\end{equation}
Since $(\delta - \Phi)_+$ is a nonnegative Lipschitz subsolution to \eqref{Gamma/vtheta}, we apply Lemma \ref{lemma-21} and use \eqref{eq-11a} to deduce
\begin{align*}
\sup_{P_{\sqrt{\kappa}R/4}} (\delta-\Phi)_+ &\lesssim \left\{\frac{1}{R^4} \iint_{P_{\sqrt{\kappa}R/2}} (\delta-\Phi)_+^2 dx dt\right\}^\frac{1}{2} \\
&\le \left\{\frac{M\delta^2}{R^2|\ln \delta|}\right\}^\frac12.
\end{align*}
Choose a $\delta$ small enough we get a point-wise lower bound
$$\Phi(x,t) \ge \frac{\delta}{2},$$
for $(x,t) \in P_{\sqrt{\kappa}R/4}$. This implies
$$\left(\sup_{P_{\sqrt{\kappa}R/4}}- \inf_{P_{\sqrt{\kappa}R/4}}\right) \Phi \le 1-\sigma,$$
for some constant $\sigma >0$. Hence
\begin{equation} \label{eq-11b}
\left(\sup_{P_{\sqrt{\kappa}R/4}}- \inf_{P_{\sqrt{\kappa}R/4}}\right) \Gamma \le (1-\sigma)\left(\sup_{P_{R}}- \inf_{P_{R}}\right) \Gamma.
\end{equation}
Iterating \eqref{eq-11b} for a sequence of $R_k \to \infty$, we get $\Gamma \equiv \Gamma(x=0,t=0)=0$. As mentioned earlier, this implies $v = c e_z$ with $c$ being a constant.
\end{proof}

\section{Proof of Theorem \ref{thgudai}}

In this section, we will use a weighted energy method to treat non-periodic ancient solutions under an extra assumption on the convergence rate of $\Gamma$.
We will prove that
\be
\lab{gammato0}
\lim\sup_{r \to \infty} \Gamma = 0
\ee uniformly in $t$ and $z$. Then \cite{LZZ} Theorem 1.3, Remark 1.4, will imply that $v= c e_z$.
Recall the equation for $\Gamma$:
\be
\lab{eqgammat}
\p^2_r \Gamma + \p^2_z \Gamma - v_r \p_r \Gamma  - v_z \p_z \Gamma - \frac{1}{r} \p_r \Gamma - \p_t \Gamma= 0.
\ee

Suppose for contradiction that (\ref{gammato0}) is false.

Then
\[
\limsup_{r \to \infty, z, t \to t_\infty} |\Gamma| =c_0 \neq 0
\] for some constant $c_0$. Here $t_\infty$ is either $-\infty$ or a finite negative number.
  We can assume,
without loss of generality, that $c_0=1$, otherwise we can multiply $\Gamma$ by a suitable constant.

 First, we make the observation that
 \be
 \lab{gam<1}
 |\Gamma| \le 1.
 \ee The reason is that
\be \label{limsup=sup}
\limsup_{r \to \infty, z, t \to t_\infty} |\Gamma| = \sup_{r, z, t} |\Gamma|.
\ee
Otherwise there would be a bounded sequence $\{ r_i \}$, $z_i$ and $t_i \to t_\infty$ such that
\[
\lim_{i \to \infty} |\Gamma(r_i, z_i, t_i)| = \sup_{r, z, t} |\Gamma|.
\] Consider the translated sequence
 \[
 \Gamma_i=\Gamma_i(r, z, t)=\Gamma(r, z, t+t_i).
 \]Then we can find a subsequence of $\Gamma_i$ which converges, in $C^{2, 1}_{loc}$ topology,  to $\Gamma_\infty$ which is a bounded ancient solution of
\[
\Delta \Gamma_\infty - \bar{b} \nabla \Gamma_\infty - \frac{2}{r} \p_r \Gamma_\infty - \p_t \Gamma_\infty=0.
\]Here $\bar b$ is a bounded $C^{2, 1}$ vector field. We can suppose that $r_i \to r_\infty<\infty$ and $z_i \to z_\infty$. Then $\Gamma_\infty$ reaches nonzero interior maximum away from the $z$ axis at the point $(r_\infty, z_\infty, 0)$. Hence $\Gamma_\infty$ is a nonzero constant by the maximum principle. This contradicts with the fact that  $\Gamma_\infty=0$ at the $z$ axis. This proves (\ref{gam<1}).
So for the rest of the proof we can and do assume that $\lim\sup_{r \to \infty} |\Gamma|
 =\sup |\Gamma| =1$.

Let $R>R_0>r_0(>1)$ be large numbers which can be chosen later. In this section, we take a weight function $\lam$ to be
\be
\lab{lamf2}
\al
\lam=
\begin{cases}  r, &\qquad r \in [0, r_0];\\
r_0-\frac{r_0-1}{R_0-r_0} (r-r_0) &\qquad r \in [r_0, R_0];\\
1, &\qquad r \ge R_0.
\end{cases}
\eal
\ee
Instead of $rdrdz$, we will perform energy estimates against the measure $\lambda(r)drdz$ with carefully chosen cut-off functions.

We consider the domain
\[
D = D_1 \cup D_2,
\]where \[ D_1=\{(r, z, t) \, | \, 0 \le r <R_0;  -R \le z \le R,
-T \le t \le 0 \},
\]
\[
\quad D_2 =\{(r, z, t) \, | \, R_0 \le r  \le R;  -R \le z \le R, -T \le t \le 0  \}.
\]

Let
\[
\phi_1=\xi_1(z) \frac{t}{-T}
\]where $\xi=\xi(z)$ is a smooth cut-off function on $[-R, R]$ such that $0 \le \xi_1 \le 1$,
$|\xi'_1| \le C/R$ and $\xi_1(z)=1$ when $z \in [-R/2, R/2]$.

Let $\phi_2=\phi_2(r, z, t)$ be the unique solution to the final time boundary value problem of the backward equation, which is well-posed.
\be
\lab{eqphi2.t4}
\al
\begin{cases}
&\p^2_r \phi_2 + \p^2_z \phi_2 +
 \frac{2 \lam'(r)}{\lam(r)} \p_r \phi_2+ v_r \p_r \phi_2 + v_z \p_z \phi_2
 \\
 &\qquad + \frac{1}{2} \left[ \left(\frac{ \lam'(r)}{\lam(r)}-\frac{1}{r} \right) v_r + (\frac{\lam(r)}{r})' \frac{1}{\lam(r)} \right]  \phi_2 -A \phi_2 +\p_t \phi_2 = 0, \quad (r, z) \in D_2,\\
&\phi_2(R_0, z, t) =\phi_1(z, t), \quad \phi_2(R, z, t)=0, t \in [-T, 0]; \quad \phi_2=0 \quad \text{if} \quad
z=R, -R; \\
&\phi_2(r, z, 0)=0.
\end{cases}
\eal
\ee Here we take $A= \frac{1}{2 R_0} \Vert v_r \Vert_\infty$. We keep $\lambda$ in the expressions for possible future use, even though $\lambda=1$ for $r \ge R_0$.

 Since $r \ge R_0>1$ and $\lam=1$ on $D_2$, standard parabolic equation theory tells us that the above problem has a unique solution such that $0 \le \phi_2 \le 1$ and $\Vert \nabla \phi_2 \Vert_\infty \le A_0$. Here $A_0$ is a constant depending only on $C^1$ norm of $b=v_r e_r + v_z e_z$.

 Now we compute
 \[
 \al
 &-\int_{D_1} ( \p^2_r \Gamma + \p^2_z \Gamma ) \Gamma \phi^2_1 d\mu dt\\
 & =
 -\int_{D_1} ( \p^2_r \Gamma + \p^2_z \Gamma ) \Gamma \phi^2_1 \lam(r) drdzdt\\
 &=\int_{D_1} ( |\p_r \Gamma|^2 + |\p_z \Gamma|^2 ) \phi^2_1 d\mu dt +
 \int^0_{-T}\int^R_{-R} \int^{R_0}_0 (\p_r \Gamma) \Gamma \phi^2_1(z, t) \lam'(r) drdzdt \\
 &+ \int^0_{-T}\int^R_{-R} \int^{R_0}_0 (\p_z \Gamma) \Gamma 2 \phi_1(z, t) \p_z \phi_1(z, t)  \lam(r) drdzdt
 -\int^0_{-T}\int^R_{-R} (\p_r \Gamma) \Gamma \phi^2_1(z, t) \lam(r) \big|_{r=R_0} dz.
 \eal
 \]Similarly,
 \[
 \al
 -\int_{D_2} &( \p^2_r \Gamma + \p^2_z \Gamma ) \Gamma \phi^2_2 d\mu dt\\
 &=\int_{D_2} ( |\p_r \Gamma|^2 + |\p_z \Gamma|^2 ) \phi^2_2 d\mu +
 \int^0_{-T}\int^R_{-R} \int^R_{R_0} (\p_r \Gamma) \Gamma \phi^2_2 \lam'(r) drdzdt \\
 &\qquad + \int^0_{-T}\int^R_{-R} \int^R_{R_0} (\p_r \Gamma) \Gamma 2 \phi_2 \p_r \phi_2  \lam(r) drdzdt
 +\int^0_{-T}\int^R_{-R} \int^R_{R_0} (\p_z \Gamma) \Gamma 2 \phi_2 \p_z \phi_2  \lam(r) drdzdt\\
 &\qquad +\int^0_{-T}\int^R_{-R} (\p_r \Gamma) \Gamma \phi^2_2 \lam(r) \big|_{r=R_0} dzdt.
 \eal
 \]
Adding the previous two identities, noting the last two boundary terms cancel, and also $\lam'=0$ when $r >R_0$, we obtain
\be
\lab{Lt74}
\al
L &\equiv \int_{D_1} ( |\p_r \Gamma|^2 + |\p_z \Gamma|^2 ) \phi^2_1 d\mu dt +
\int_{D_2} ( |\p_r \Gamma|^2 + |\p_z \Gamma|^2 ) \phi^2_2 d\mu dt\\
&= \underbrace{-\int_{D_1} ( \p^2_r \Gamma + \p^2_z \Gamma ) \Gamma \phi^2_1 d\mu dt
}_{T_1} \quad \underbrace{- \int^0_{-T}\int^R_{-R} \int^{R_0}_0 (\p_r \Gamma) \Gamma \phi^2_1(z, t) \lam'(r) drdzdt}_{T_2} \\
 &\qquad \underbrace{- \int^0_{-T}\int^R_{-R} \int^{R_0}_0 (\p_z \Gamma) \Gamma 2 \phi_1 \p_z \phi_1  \lam(r) drdzdt }_{T_3} \quad
 \underbrace{-\int_{D_2} ( \p^2_r \Gamma + \p^2_z \Gamma ) \Gamma \phi^2_2 d\mu dt}_{T_4}\\
 &\qquad
 \underbrace{ - \int^0_{-T} \int^R_{-R} \int^R_{R_0} (\p_r \Gamma) \Gamma 2 \phi_2 \p_r \phi_2  \lam(r) drdzdt}_{T_5} \quad
\underbrace{ -\int^0_{-T}\int^R_{-R} \int^R_{R_0} (\p_z \Gamma) \Gamma 2 \phi_2 \p_z \phi_2  \lam(r) drdzdt}_{T_6}\\
&\equiv T_1 + ... +T_6.
\eal
\ee In the rest of the proof we will find an upper bound for each $T_i$, $i=1, ..., 6$.

{\it Step 3. bound for $T_1$.}

From equation (\ref{eqgammat}),
\[
T_1 = - \int^0_{-T}\int^R_{-R} \int^{R_0}_0 ( v_r \p_r \Gamma + v_z \p_z \Gamma + \frac{1}{r} \p_r \Gamma
+\p_t \Gamma)
\Gamma \phi^2_1 \lam(r) drdzdt.
\]After integration by parts and using the divergence free property of $v_r e_r + v_z e_z$, we
deduce
\be
\lab{t1t164}
\al
T_1 &= -\frac{1}{2} \int^0_{-T}\int^R_{-R} \int^{R_0}_0 \left[ v_r \p_r (\Gamma^2-1)
+ v_z \p_z (\Gamma^2-1) + \frac{1}{r} \p_r \Gamma^2   + \p_t (\Gamma^2 -1) \right]
 \phi^2_1(z, t) \lam(r) drdzdt\\
 &=\underbrace{\frac{1}{2} \int^0_{-T}\int^R_{-R} \int^{R_0}_0 ( v_r \p_r \phi^2_1
+ v_z \p_z \phi^2_1 ) (\Gamma^2-1) \lam(r) drdzdt}_{T_{11}} \\
 &\qquad \underbrace{- \frac{1}{2} \int^0_{-T}\int^R_{-R} v_r (\Gamma^2-1) \phi^2_1 \lam(r) \big|_{r=R_0} dzdt}_{T_{12}}\\
 &\qquad \underbrace{+\frac{1}{2} \int^0_{-T}\int^R_{-R} \int^{R_0}_0 v_r (\Gamma^2-1) \phi^2_1 \left( \lam'(r)- \frac{\lam(r)}{r} \right) drdzdt}_{T_{13}}\\
 &\qquad \underbrace{+\frac{1}{2} \int^0_{-T}\int^R_{-R} \int^{R_0}_0 \Gamma^2 \phi^2_1 \left( \frac{\lam(r)}{r} \right)' drdzdt}_{T_{14}} \quad \underbrace{- \frac{1}{2} \int^0_{-T}\int^R_{-R}  \Gamma^2 \phi^2_1 \frac{\lam(r)}{r} \big|_{r=R_0} dzdt}_{T_{15}}\\
 &\qquad
  + \underbrace{\frac{1}{2} \int^0_{-T}\int^R_{-R}\int^{R_0}_0  (\Gamma^2-1) \p_t \phi^2_1 \lam(r) drdzdt}_{T_{16}} +  \underbrace{ \left(-
  \frac{1}{2} \right) \int^R_{-R}\int^{R_0}_0 (\Gamma^2-1) \phi^2_1 \big|^{t=0}_{t=-T} \lam(r) drdz}_{T_{17}}\\
 &\equiv T_{11} + ... + T_{17}.
\eal
\ee

Notice that
\[
|T_{11}| \le \Vert v_z \Vert_\infty \frac{C T}{R} R R_0 \Vert \lam \Vert_\infty \Vert \Gamma^2-1 \Vert_\infty = C R_0 r_0 T \Vert v_z \Vert_\infty \, \Vert \Gamma^2-1 \Vert_\infty.
\] The term $T_{12}$ is a  boundary one, which will be cancelled with a boundary term from integration on $D_2$, called $T_{42}$. $T_{14} \le 0$ and $T_{15} \le 0$. Also, since
\[
|\p_t \phi_1| \le 1/T
\] and
\[
\int^{R_0}_0 \lam(r) dr = \frac{1}{2} (R_0-r_0) + \frac{1}{2} R_0 r_0,
\] direct computation shows
\be
\lab{t164}
T_{16} \le \frac{1}{2} \Vert \Gamma^2-1 \Vert_\infty \,  R (R_0 r_0 + R_0 - r_0) \le \Vert \Gamma^2-1 \Vert_\infty \,  R R_0 r_0.
\ee
The second inequality is due to $r_0 > 1$. Similarly,
\be
\lab{t174}
T_{17} \le -\frac{1}{2} \inf [1-\Gamma^2(R_0, \cdot, -T)] \,  R (R_0 r_0 + R_0 - r_0) \le 0.
\ee Hence, we can deduce, after leaving $T_{12}$ and $T_{13}$ along for now, that
\be
\lab{t1<t4}
\al
T_1 &\le C R_0 r_0 \Vert v_z \Vert_\infty  \Vert \Gamma^2-1 \Vert_\infty T  +  \Vert \Gamma^2-1 \Vert_\infty  R R_0 r_0
+T_{12} \\
 &\qquad +\frac{1}{2} \int^0_{-T} \int^R_{-R} \int^{R_0}_0 v_r (\Gamma^2-1) \phi^2_1 \left( \lam'(r)- \frac{\lam(r)}{r} \right) dr dz dt.
\eal
\ee

{\it Step 4. bounds for $T_2, ..., T_7$}

First we bound $T_2$. By our choice of $\lam$,
\be
\lab{t2<t}
\al
T_2&= -\frac{1}{2} \int^0_{-T}\int^R_{-R} \int^{R_0}_0 \p_r (\Gamma^2-1) \phi^2_1 \lam'(r) dr dzdt\\
&=-\frac{1}{2} \int^0_{-T}\int^R_{-R} \phi^2_1 \int^{r_0}_0 \p_r (\Gamma^2-1)   drdzdt
+\frac{r_0-1}{2(R_0-r_0)} \int^0_{-T}\int^R_{-R} \phi^2_1 \int^{R_0}_{r_0} \p_r (\Gamma^2-1)   drdzdt\\
&\le \left[-\frac{1}{6}  + \left(\frac{1}{6}+ \frac{r_0-1}{3(R_0-r_0)} \right) \sup_{r = r_0, t \in [-T, 0]} (1-\Gamma^2) \right] R T.
\eal
\ee

Next
\be
\lab{t3<t}
T_3 \le C \Vert \p_z \Gamma \Vert_\infty r_0 R_0 T
\ee since $|\p_z \phi_1| \le C/R$.

Now we deal with the terms $T_4, ..., T_7$, which involve integrations on $D_2$ only.
From equation (\ref{eqgammat}),
\[
T_4 = - \int^0_{-T}\int^R_{-R} \int^R_{R_0} ( v_r \p_r \Gamma + v_z \p_z \Gamma + \frac{1}{r} \p_r \Gamma + \p_t \Gamma)
\Gamma \phi^2_2 \lam(r) drdzdt.
\]After integration by parts and using the divergence free property of $v_r e_r + v_z e_z$, we
deduce
\be
\lab{t4=t}
\al
T_4 &= -\frac{1}{2} \int^0_{-T}\int^R_{-R} \int^R_{R_0} \left[ v_r \p_r (\Gamma^2-1)
+ v_z \p_z (\Gamma^2-1) + \frac{1}{r} \p_r (\Gamma^2-1) + \p_t (\Gamma^2-1)\right]
 \phi^2_2 \lam(r) drdzdt\\
 &=\underbrace{ \frac{1}{2} \int^0_{-T}\int^R_{-R} \int^R_{R_0} ( v_r \p_r \phi^2_2
+ v_z \p_z \phi^2_2 ) (\Gamma^2-1) \lam(r) drdzdt}_{T_{41}} \\
 &\qquad \underbrace{+ \frac{1}{2} \int^0_{-T}\int^R_{-R} v_r (\Gamma^2-1) \phi^2_2 \lam(r) \big|_{r=R_0} dz}_{T_{42}}\\
 &\qquad \underbrace{+\frac{1}{2} \int^0_{-T}\int^R_{-R} \int^R_{R_0} v_r (\Gamma^2-1) \phi^2_2 \left( \lam'(r)- \frac{\lam(r)}{r} \right) drdzdt}_{T_{43}}\\
 &\qquad \underbrace{+\frac{1}{2} \int^0_{-T}\int^R_{-R} \int^R_{R_0} (\Gamma^2-1) \phi^2_2 \left( \frac{\lam(r)}{r} \right)' drdzdt}_{T_{44}} \quad
  \underbrace{+ \frac{1}{2} \int^0_{-T}\int^R_{-R}  (\Gamma^2-1) \phi^2_2 \frac{\lam(r)}{r} \big|_{r=R_0} dzdt}_{T_{45}}\\
  &\qquad \underbrace{+ \frac{1}{2} \int^0_{-T}\int^R_{-R} \int^R_{R_0} (\Gamma^2-1) \p_t \phi^2_2\lam(r) drdzdt}_{T_{46}} \quad + \underbrace{\frac{1}{2} \int^R_{-R} \int^R_{R_0} (\Gamma^2-1)  \phi^2_2\lam(r) \big|_{t=-T} drdz}_{T_{47}}\\
 &\equiv T_{41} + ... + T_{47}.
\eal
\ee Notice that $T_{42}$ will cancel with $T_{12}$ when all terms are added. Also $T_{45} \le 0$
and
\be
\lab{t47}
T_{47} \le -  \inf_{r \in [R_0, R]} (1 -\Gamma^2(r, \cdot, -T)) \int^R_{-R} \int^R_{R_0}
\phi^2_2(r, z, -T) \lam(r)  drdz \le 0.
\ee Therefore
\[
T_4 \le T_{41} + T_{42} + T_{43} + T_{44} + T_{46}.
\]

Next
\be
\lab{t5<t}
\al
T_5 &= -\frac{1}{2} \int^0_{-T}\int^R_{-R} \int^R_{R_0} \p_r (\Gamma^2-1) \p_r \phi^2_2 \lam(r) drdzdt\\
&=\frac{1}{2} \int^0_{-T}\int^R_{-R} \int^R_{R_0}  (\Gamma^2-1) \p^2_r \phi^2_2 \lam(r) drdzdt - \frac{1}{2} \int^0_{-T}\int^R_{-R}  (\Gamma^2-1) \p_r \phi^2_2 \lam(r) \big|^R_{R_0} dzdt,
\eal
\ee since $\lam(r)=1$ here. Finally
\be
\lab{t6<t}
T_6 = \frac{1}{2} \int^0_{-T}\int^R_{-R} \int^R_{R_0}  (\Gamma^2-1) \p^2_z \phi^2_2 \lam(r) drdzdt.
\ee

Combining the bounds on $T_i$, $i=1, ..., 6$, noticing cancellation of boundary terms
$T_{12}$  with $T_{42}$,
we find that
\[
\al
L &\le \left[-\frac{1}{6}  + \left(\frac{1}{6}+ \frac{r_0-1}{3(R_0-r_0)} \right) \sup_{r = r_0, t \in [-T, 0]} (1-\Gamma^2) \right] R T\\
  &\qquad + C R_0 r_0 \Vert v_z \Vert_\infty \Vert \Gamma^2- 1 \Vert_\infty T  + \Vert \Gamma^2-1 \Vert_\infty R R_0 r_0 + C \Vert \p_z \Gamma \Vert_\infty r_0 R_0 T \\
&\qquad+\frac{1}{2} \int^0_{-T}\int^R_{-R} \int^{R_0}_0 v_r (\Gamma^2-1) \phi^2_1(z) \left( \lam'(r)- \frac{\lam(r)}{r} \right) dr dz dt\\
&\qquad +\frac{1}{2} \int^0_{-T}\int^R_{-R} \int^R_{R_0} \bigg[\p^2_r \phi^2_2 + \p^2_z \phi^2_2 + v_r \p_r \phi^2_2
+ v_z \p_z \phi^2_2 + 2 \frac{\lam'(r)}{\lam(r)} \p_r \phi^2_2\\
&\hskip3cm +
v_r  \left( \frac{\lam'(r)}{\lam(r)}- \frac{1}{r} \right) \phi^2_2
 +\left( \frac{\lam(r)}{r} \right)' \frac{1}{\lam(r)} \phi^2_2 + \p_t \phi^2_2\bigg] (\Gamma^2-1) \lam(r) drdzdt \\
&\qquad + \frac{1}{2} \int^0_{-T}\int^R_{-R}  (\Gamma^2-1) \p_r \phi^2_2 \lam(r) \big|_{r=R_0} dzdt.
\eal
\]Since $\Gamma^2- 1 \le 0$ and $\phi_2$ is a solution to (\ref{eqphi2.t4}), the second from last integral in the above inequality is nonpositive. Hence
\be
\lab{l<34}
\al
L &\le  -\frac{1}{6} R T + \left(\frac{1}{6}+ \frac{r_0-1}{3(R_0-r_0)} \right) \sup_{r = r_0, t \in [-T, 0]} (1-\Gamma^2) R T  + C R_0 r_0 \Vert v_z \Vert_\infty  T \, \Vert \Gamma^2-1 \Vert_\infty\\
 &\qquad+ \Vert \Gamma^2-1 \Vert_\infty R R_0 r_0+ C \Vert \p_z \Gamma \Vert_\infty r_0 R_0 T\\
  &\qquad+\underbrace{\frac{1}{2} \int^0_{-T}\int^R_{-R} \int^{R_0}_0 v_r (\Gamma^2-1) \phi^2_1(z) \left( \lam'(r)- \frac{\lam(r)}{r} \right) drdzdt}_{I_1} \\ &\qquad  + \underbrace{\frac{1}{2} \int^0_{-T}\int^R_{-R}  (\Gamma^2-1) \p_r \phi^2_2 \lam(r) \big|_{r=R_0} dzdt}_{I_2}\\
&\equiv ...+ I_1 + I_2.
\eal
\ee Here the last two integrals are denoted by $I_1$, $I_2$ respectively.

{\it Step 5.}

It remains to bound the two integrals $I_1$ and $I_2$.

First let us bound $I_2$. Observe that the coefficients of lower order terms of the equation (\ref{eqphi2.t4}) are bounded by
 $\frac{1}{R_0}+ \Vert v_r \Vert_\infty + \Vert v_z \Vert_\infty$ in $D_2$ and that the boundary value of $\phi_2$ at $r=R_0$ is $\phi_1$ which satisfies $0 \le \phi_1 \le 1$, $|\p_z \phi_1|  \le C/R$
 and $|\p_t \phi_1| \le 1/T$. By standard boundary gradient bound for parabolic equations, we know that
 \[
 |\p_r \phi_2|_{r=R_0} \le C_1, \qquad C_1=C_1(\Vert v_r \Vert_\infty, \Vert v_z \Vert_\infty).
 \]Hence
 \be
 \lab{i2<t4}
 I_2 \le C_1 \sup_{r=R_0, t \in [-T, 0]} (1-\Gamma^2)  R T.
 \ee

Finally
\[
I_1 = \frac{1}{2} \int^0_{-T}\int^R_{-R} \int^{R_0}_{r_0} v_r (\Gamma^2-1) \phi^2_1(z, t) \left( \lam'(r)- \frac{\lam(r)}{r} \right) drdzdt.
\]By direct computation, we see that
\[
\left| \lam'(r)- \frac{\lam(r)}{r} \right| \le 2 \frac{r_0}{r}
\]when $R_0 >> r_0$. Therefore, under the assumption of the theorem, i.e.
\[
|\Gamma^2(r, z, t)-1| \le \e/r,
\] we have
\[
|I_1| \le C  R T \sup_{r_0 \le r \le R_0} |v_r| \, r_0 \int^{R_0}_{r_0} \frac{\e}{r^2} dr \le
 C \e R T \sup_{r_0 \le r \le R_0} |v_r|.
\]

\be
\lab{i1<t4}
I_1 \le C \e R T.
\ee

Now we substitute (\ref{i2<t4}) and (\ref{i1<t4}) into (\ref{l<34}) to obtain
\be
\lab{ineq421}
\al
L &\le
-\frac{1}{6} R T + \left(\frac{1}{6}+ \frac{r_0-1}{3(R_0-r_0)} \right) \sup_{r = r_0, t \in [-T, 0]} (1-\Gamma^2) R T  + C R_0 r_0 \Vert v_z \Vert_\infty  T \Vert \Gamma^2-1 \Vert_\infty\\
 & \qquad +  \Vert \Gamma^2-1 \Vert_\infty  R R_0 r_0+ C \Vert \p_z \Gamma \Vert_\infty r_0 R_0 T\\
  &\qquad  + C_1 \sup_{r = R_0,  t \in [-T, 0]} (1-\Gamma^2) R T + C \e R T.
\eal
\ee
Recalling the definition of $L$ from the first line of (\ref{Lt74}), and using $\Gamma^2 \to 1$, we deduce
\be
\lab{mainineq2.14}
\al
& \int^0_{-T}\int_{D_1} ( |\p_r \Gamma|^2 + |\p_z \Gamma|^2 ) \phi^2_1 d\mu +
\int^0_{-T}\int_{D_2} ( |\p_r \Gamma|^2 + |\p_z \Gamma|^2 ) \phi^2_2 d\mu \\
&\le -\frac{1}{12} R T   + C \Vert \Gamma^2-1 \Vert_\infty  R_0 r_0 \Vert v_z \Vert_\infty  T \\
 &\qquad +  \Vert \Gamma^2-1 \Vert_\infty R R_0 r_0 + C \Vert \p_z \Gamma \Vert_\infty r_0 R_0 T\\
  &\qquad   + C \e R T,
\eal
\ee when $R_0 >> r_0 >> 1.$

{\it Step 6.}

Note that $|\partial_z \Gamma|$ is uniformly bounded by standard parabolic theory. Under the assumptions  of the theorem, inequality (\ref{mainineq2.14}) is impossible when $R$ is sufficiently large and $T >> R_0 r_0$, $R_0>>r_0$ and
$\e$ is sufficiently small.
Hence (\ref{gammato0}) is true, i.e. $\lim_{r \to \infty} \Gamma =0$ uniformly. As proven in \cite{LZZ} Theorem 1.3, Remark 1.4, this shows $v_\theta=0$ and $v=c e_z$. \qed

\section{Steady periodic solutions}

In this section, we demonstrate that the weighted energy estimate method used in Section 4 can, in fact, be adapted to give an elementary proof of Theorem \ref{thzhouqi} with an extra assumption: $v$ is stationary in time. First, we recall the following general fact:

\begin{lem}[Sliding  Property]
\label{lem-sliding-time}
Let $v$ be a bounded ancient mild solution to ASNS with $|\Gamma| \lesssim 1$. Let $(x_n, t_n)$ be any sequence with $x_n = (r_n, 0, z_n)$ such that $r_n \to \infty$. Then, up to a subsequence,
$v$ uniformly converges to a constant vector on the parabolic cube $Q_R(x_n, t_n)=\{(x, t) \,  | \, |x-x_n|<R, \, 0<t_n-t<R^2 \}$ for any given $R > 0$. Moreover, up to a further subsequence, $\Gamma$ uniformly converges to a constant on $Q_R(x_n, t_n)$.
\end{lem}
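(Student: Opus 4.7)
My plan is to slide the ancient solution to be centered at $(x_n,t_n)$, extract a limit along a subsequence, and reduce the statement to the known Liouville theorem for the two-dimensional Navier--Stokes equations. First I would define translates $w_n(y,s)=v(x_n+y,t_n+s)$ and $\Gamma_n(y,s)=\Gamma(x_n+y,t_n+s)$ for $(y,s)\in\mathbb{R}^3\times(-\infty,0]$, where we use that the Cartesian formulation of ASNS is just the 3D Navier--Stokes system, so translation is legitimate. Boundedness of $v$ together with interior parabolic regularity for Navier--Stokes (applied at unit scale, well away from any singularity source) gives uniform $C^k_{\mathrm{loc}}$ bounds on $w_n$ and $\Gamma_n$ that are independent of $n$. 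A diagonal Arzel\`a--Ascoli argument then produces a subsequence along which $w_n\to w_\infty$ and $\Gamma_n\to\Gamma_\infty$ in $C^\infty_{\mathrm{loc}}(\mathbb{R}^3\times(-\infty,0])$.

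Next I would identify the limit. Since $\theta_n=0$, the Cartesian offset $y=(y_1,y_2,y_3)$ relative to $x_n$ satisfies $r(x_n+y)=r_n+y_1+O(r_n^{-1})$ for $|y|\le R$, and the moving frame obeys $e_r(x_n+y)\to e_1$, $e_\theta(x_n+y)\to e_2$ uniformly on $Q_R$. Because $|v_\theta|=|\Gamma|/r\le C/r_n\to 0$, the $e_\theta$-component of $w_n$ vanishes in the limit, and the axisymmetric dependence of $v_r,v_z$ on $(r,z)$ becomes in the limit a dependence on $(y_1,y_3)$ alone. The curvature coefficients $1/r$ and $1/r^2$ in \eqref{eqasns} vanish uniformly on $Q_R$, so $w_\infty=w_\infty(y_1,y_3,s)$ is a bounded, divergence-free, two-dimensional flow solving the 2D Navier--Stokes system on $\mathbb{R}^2\times(-\infty,0]$. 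Because the translates are mild and the convergence is smooth on compact sets, $w_\infty$ is a bounded mild ancient solution, so the KNSS Liouville theorem for 2D Navier--Stokes (\cite{KNSS}) forces $w_\infty$ to be a constant vector. This gives the first assertion.

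For the second assertion, after extracting a further subsequence and passing to the limit in \eqref{Gamma/vtheta} (using $1/r\to 0$ and $b\to w_\infty$), $\Gamma_\infty$ is a bounded function on $\mathbb{R}^3\times(-\infty,0]$ satisfying the constant-coefficient linear parabolic equation
\[
\partial_s\Gamma_\infty+w_\infty\cdot\nabla\Gamma_\infty-\Delta\Gamma_\infty=0.
\]
The substitution $y\mapsto y-w_\infty s$ reduces this to the heat equation, whose bounded ancient solutions are constants, so $\Gamma_\infty$ is constant. The hard part of the plan is Step~2: one must justify that the smooth local limit $w_\infty$ is genuinely mild in the sense of \cite{KNSS}, which is a nonlocal condition tied to the pressure through the Stokes kernel; this is precisely where the global bounds $\|v\|_\infty<\infty$ and $|\Gamma|\lesssim 1$ (hence the uniform decay of $v_\theta$) are essential, to control tail contributions to the kernel representation as $n\to\infty$.
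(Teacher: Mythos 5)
Your proposal follows essentially the same route as the paper's proof: translate to $(x_n, t_n)$, extract a locally smooth limit, use $v_\theta = \Gamma/r \to 0$ together with the vanishing of the curvature coefficients to show the limit is a two-dimensional bounded mild ancient Navier--Stokes solution, invoke the KNSS 2D Liouville theorem, and for $\Gamma$ reduce the limiting constant-coefficient drift-diffusion equation to the heat equation. The mildness-under-limit concern you flag at the end is legitimate but is treated as a standard fact in the paper (it is part of the KNSS framework that the class of bounded mild ancient solutions is closed under space-time translations and local limits, with the pressure recovered via the Riesz transforms), so you need only cite that rather than re-derive it.
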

\begin{proof}
The conclusion is known in the literature. See the proof of Theorem 1.1 in \cite{LZ}.
 But for completeness, let's give a simple proof here. Define
$$v^{(n)}(x, t) = v(x_n + x, t_n + t),\quad p^{(n)}(x, t) = p(x_n + x, t_n+t).$$ Here $p$ is the pressure.
Clearly, $(v^{(n)}, p^{(n)})$ is still a bounded ancient mild solution to the Navier-Stokes equations, so is its weak limit $(v^{(\infty)}, p^{(\infty)})$ (up to a subsequence). Moreover, the convergence from $(v^{(n)}, p^{(n)})$ to its limit $(v^{(\infty)}, p^{(\infty)})$ is locally strong in $C^{2k, k}_{{\rm loc}}$ for any $k \geq 0$.

Now consider any parabolic cube $Q_R = B_R(0) \times [-R^2, 0]$. Up to subsequence, we have
\begin{equation}\nonumber
\begin{cases}
e_r(x + x_n) \to e_1,\quad e_\theta(x + x_n) \to e_2,\\[-4mm]\\
v^{(n)}(x, t)\cdot e_\theta(x + x_n) = \frac{\sqrt{y_1^2 + y_2^2} v(y, t_n + t)\cdot e_\theta(y)}{\sqrt{y_1^2 + y_2^2}}\Big|_{y = x_n + x} \rightarrow 0,
\end{cases}
\quad (x, t) \in Q_R,
\end{equation} where $y=x_n +x$.
where $e_1$, $e_2$ are two perpendicular vectors. Hence, we have
$$v^{(\infty)}\cdot e_2 = 0\quad {\rm on}\ Q_R.$$
 Write $x=x_1 e_1 + x_2 e_2 + x_3 e_z$ with $e_1, e_2$ given above and $e_z=(0, 0, 1)$.
  Due to
\begin{eqnarray}\nonumber
\partial_{x_2}v^{(n)}(x, t) &=& \partial_{x_2}v(x_n + x, t_n+t) = \partial_{y_2}v(y, t_n+t)\\\nonumber
&=& \frac{y_2}{r(y)}\partial_r v(y, t_n + t) + \frac{y_1}{r(y)^2}\partial_\theta v(y, t_n + t)\\\nonumber
&=& \frac{y_2}{r(y)}\partial_r v(y, t_n + t)  + \frac{y_1}{r(y)^2} v_r(y, t_n + t)e_\theta(y)\\\nonumber
&& -\ \frac{y_1}{r(y)^2} v_\theta(y, t_n + t) e_r(y) \rightarrow 0\ \ {\rm on}\ Q_R,
\end{eqnarray}
we conclude that $v^{(\infty)}$ on $Q_R$ is independent of $x_2$. Here $r(y)$ is the distance from $y$ to the $z$ axis. Since $R$ is arbitrary, we can conclude that $$(v^{(\infty)}\cdot e_1, v^{(\infty)}\cdot e_z)$$
is a bounded ancient mild solution to the  2 dimensional Navier-Stokes equations. Using the Liouville theorem in \cite{KNSS}, we see that $v^{(\infty)}$ must be a constant vector (independent of time). This proves the convergence for $v$.

Next we turn to $\Gamma$.
Define
$$\Gamma^{(n)}(x, t) = \Gamma(x_n+x, t_n + t).$$
Up to a further subsequence, one has
$$\Gamma^{(n)} \to \Gamma^{(\infty)},\quad |\Gamma^{(\infty)}| \lesssim 1,$$
and the convergence is locally strong in $C^{k, 2k}_{{\rm loc}}$ for any $k \geq 0$. Moreover, one can similarly derive that $\Gamma^{(\infty)}$ is independent of $x_2$. From the equation
\begin{eqnarray}\label{G-eqn}
\partial_t\Gamma + b\cdot\nabla\Gamma + \frac{1}{r}\partial_r\Gamma = (\partial_r^2 + \partial_z^2)\Gamma,
\end{eqnarray} we deduce
$$\partial_t\Gamma^{(n)} + v^{(n)}\cdot\nabla\Gamma^{(n)} + \frac{y}{r^2}\Big|_{y = x_n + x}\cdot(\p_1, \p_2, 0)^T\Gamma^{(n)} = \Delta\Gamma^{(n)},$$
one sees that
$$\partial_t\Gamma^{(\infty)} + v^{(\infty)}\cdot\nabla\Gamma^{(\infty)} = (\partial_1^2 + \partial_z^2)\Gamma^{(\infty)}.$$ Note that $v^\infty$ is a constant vector. Therefore, one can convert the above equation into the standard heat equation by a change of variable.
The standard Liouville theorem for the heat equation implies that $\Gamma^{(\infty)}$ must be a constant.  We have proved the lemma.
\end{proof}

Next we prove some useful convergence properties of $v_r$ and $\Gamma$, concerning ancient $z$-periodic solutions.

\begin{lem}\label{lem-vvanishing}
Let $v$ be a bounded mild ancient solution to the ASNS. If $v$ is periodic in $z$, then $v_r \to 0$ uniformly for $(z, t) \in \mathbb{T}^1 \times (-\infty, 0)$ as $r \to \infty$. If $v$ is furthermore steady in time, then there exists a constant $c$ such that $\Gamma  \to c$ uniformly for $z \in \mathbb{T}^1$.
\end{lem}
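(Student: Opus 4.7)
Both claims will be derived from Lemma~\ref{lem-sliding-time} (the Sliding Property) together with consequences of $z$-periodicity. For the first claim, the key structural fact is that periodicity combined with $\nabla\cdot b=0$ forces the $z$-average of $v_r$ to vanish identically: integrating $\tfrac{1}{r}\p_r(rv_r)+\p_z v_z=0$ over $z\in[0,Z_0]$ and using periodicity of $v_z$ gives $\p_r(r\overline{v_r}(r,t))=0$ with $\overline{v_r}(r,t):=\tfrac{1}{Z_0}\int_0^{Z_0}v_r(r,z,t)\,dz$, and the axial condition $v_r|_{r=0}=0$ then forces $\overline{v_r}\equiv 0$. To prove $v_r\to 0$ uniformly, I argue by contradiction: given $(r_n,z_n,t_n)$ with $r_n\to\infty$ and $|v_r(r_n,z_n,t_n)|\ge\delta>0$, set $x_n=(r_n,0,z_n)$ in Cartesian. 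By Lemma~\ref{lem-sliding-time}, a subsequence satisfies $v(x_n+x,t_n+t)\to v^{(\infty)}$, a constant vector, uniformly on any parabolic cube. Since $e_r(x_n+x)\to(1,0,0)=:e_1$, this gives $v_r(x_n+x,t_n+t)\to c_1:=v^{(\infty)}\cdot e_1$ with $|c_1|\ge\delta$. Fix a cube of radius $R\ge Z_0$: for fixed $(x_1,x_2,t)$, the $r$-coordinate of $x_n+x$ equals $\sqrt{(r_n+x_1)^2+x_2^2}$ and is independent of $x_3$, so $\int_0^{Z_0} v_r(x_n+x,t_n+t)\,dx_3 = Z_0\overline{v_r}=0$; passing to the limit using uniform convergence yields $Z_0 c_1=0$, a contradiction.

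For the second claim, assume $v$ is also steady, so Lemma~\ref{lem-sliding-time} applies with $t_n\equiv 0$ and produces subsequential constant limits for $\Gamma$. I proceed in two steps. First, I show that the oscillation $\Omega(r):=\sup_z\Gamma(r,z)-\inf_z\Gamma(r,z)\to 0$: if $\Omega(r_n)\ge\varepsilon$ along some $r_n\to\infty$ with witnesses $z_n^\pm$, apply the sliding lemma at $(r_n,0,z_n^+)$ with cube radius $R\ge Z_0$; by $z$-periodicity of $\Gamma$, both $\Gamma(r_n,z_n^+)$ and $\Gamma(r_n,z_n^-)$ are attained as $\Gamma(x_n+x)$ for suitable $x$ in the cube, so both converge to the same constant and contradict the gap $\varepsilon$. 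Second, I show that $\sup_z\Gamma(r,\cdot)$ has a unique cluster value as $r\to\infty$. If two cluster values $c_1<c_2$ existed, fix $0<\varepsilon<(c_2-c_1)/3$ and, by interleaving, select large $r^-<r^+<r^{--}$ with $\sup_z\Gamma(r^-,\cdot),\,\sup_z\Gamma(r^{--},\cdot)<c_1+\varepsilon$ and $\sup_z\Gamma(r^+,\cdot)>c_2-\varepsilon$. On the compact cylindrical slab $\{r^-\le r\le r^{--}\}\times\mathbb{T}^1$, the steady form of \eqref{Gamma/vtheta} is a linear second-order elliptic equation for $\Gamma$ with no zeroth-order term, so the maximum principle forces $\max\Gamma\le c_1+\varepsilon$ throughout the slab, contradicting the value attained at $r^+$. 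Combined with $\Omega(r)\to 0$, uniqueness of the cluster value yields $\Gamma(r,z)\to c$ uniformly in $z$.

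The principal subtlety, common to both parts, is upgrading Lemma~\ref{lem-sliding-time}, which a priori yields only subsequential pointwise limits at individual points, into control that is uniform in $z\in\mathbb{T}^1$. Enlarging the parabolic cube to contain a full $z$-period ($R\ge Z_0$) is what makes this possible: it lets the identity $\overline{v_r}\equiv 0$ transfer to the limit constant $c_1$, and it converts the pointwise constant limit of $\Gamma$ into simultaneous uniform control of $\sup_z\Gamma$ and $\inf_z\Gamma$. Once this uniformization is in place, the divergence-free averaging (for claim one) and the maximum principle in a compact slab (for claim two) finish the argument.
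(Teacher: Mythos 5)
Your proof is correct and uses the same core tools as the paper (the Sliding Property of Lemma~\ref{lem-sliding-time} applied on cubes of radius $R\ge Z_0$, the zero $z$-mean of $v_r$ forced by periodicity and incompressibility, and the maximum principle), but the organization differs in two places. For the first claim, you derive $\overline{v_r}\equiv 0$ directly from $\nabla\cdot b=0$ plus periodicity of $v_z$ and then pass to the limit in the $z$-integral to contradict $|c_1|\ge\delta$; the paper instead introduces the angular stream function $L_\theta$, notes $v_r=-\partial_z L_\theta$ is periodic, and deduces the existence of a $z=z(r,t)$ with $v_r(r,z(r,t),t)=0$, which contradicts $|v_r|\ge |c_0|/2$ on the cube. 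These are two derivations of the same fact (zero $z$-mean of $v_r$) and are equally rigorous. For the second claim, the paper's route is noticeably shorter: it selects one subsequence $r_i\to\infty$ along which $\Gamma(r_i,\cdot)$ converges uniformly to a constant $c_0$, then, for any $r$ sandwiched between large $r_i<r<r_j$, applies the maximum principle on the slab $[r_i,r_j]\times\mathbb{T}^1$ to get $|\Gamma(r,z)-c_0|<\varepsilon$ directly. Your two-step decomposition (oscillation $\Omega(r)\to 0$, then uniqueness of the cluster value of $\sup_z\Gamma$ by interleaving plus the slab maximum principle) reaches the same conclusion and is sound, but the interleaving step is extra work that the paper's sandwiching argument avoids; the payoff of your organization is that it separates cleanly the role of the sliding lemma (controlling oscillation in $z$) from the role of the maximum principle (controlling drift in $r$).
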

\begin{proof}
Suppose the conclusion is not true, then there exists $c_0 \neq 0$ and a sequence of points $P_n=(r_n, 0 , z_n, t_n)$ with $r_n \to \infty$ such that
$$\lim_{n \to \infty}v_r(P_n) = c_0.$$
Here $0$ is the angle in the cylindrical system.

Using the Sliding property in Lemma \ref{lem-sliding-time}, one sees that the solution $u$ converges to a constant  on the parabolic ball $Q_R(P_n)$ with any given radius $R$, centering at $P_n$. This means that
\begin{equation}\label{e1-1}
|v_r| \geq \frac{|c_0|}{2}\quad {\rm on}\ Q_R(P_n)
\end{equation}
as $n$ is large enough.

Now let $L_\theta$ be the angular stream function which solves
$$\nabla\times (L_\theta e_\theta) = v_r  e_r + u_z e_z,$$
which gives
$$v_r = - \partial_z L_\theta.$$
Since $L_\theta$ is periodic in $z$, there exists $z = z(r, t)$ such that
$$v_r(r, z(r, t), t) = 0.$$
for any $r$ and $t$. This clearly contradicts \eqref{e1-1}. Hence the conclusion of the lemma is true.

Next we consider the steady case and prove convergence of $\Gamma$. Pick a sequence $r_i \to 0$. By Lemma \ref{lem-sliding-time}, we can find a subsequence, still denoted by $r_i$ such that $\Gamma(r_i, z)$ converges uniformly to a constant $c_0$.
Hence for any $\e>0$, there exists integer $N>0$, if $i \ge N$, then $|\Gamma(r_i, z)-c_0| < \e$ for all $z, t$. Pick any $r>r_N$. Then there exists integers $i, j >N$ such that
$r_i \le r \le r_j$. In the domain $[r_i, r_j] \times [-Z_0, Z_0]$ for $(r, z)$, the maximum principle for $\Gamma$ gives
\[
c_0-\e \le \Gamma(r, z) \le c_0+\e.
\]
This proves the lemma. \end{proof}

From now on, we work on a bounded ancient solution $v$ to the ASNS, periodic in $z$ and independent of time $t$. Assuming that $\sup |\Gamma|=1$, one can argue as in Section 4 to perform a weighted energy estimate. Since $v$ is a steady solution, we do not need to integrate in time. Take
$\lambda$ as in Section 4, $\phi_1=\xi_1(z),$
and let $\phi_2(r,z)$ solve the elliptic problem
\be
\al
\begin{cases}
&\p^2_r \phi_2 + \p^2_z \phi_2 +
 \frac{2 \lam'(r)}{\lam(r)} \p_r \phi_2+ v_r \p_r \phi_2 + v_z \p_z \phi_2
 \\
 &\qquad + \frac{1}{2} \left[ \left(\frac{ \lam'(r)}{\lam(r)}-\frac{1}{r} \right) v_r + (\frac{\lam(r)}{r})' \frac{1}{\lam(r)} \right]  \phi_2 -A \phi_2 = 0, \quad (r, z) \in D_2,\\
&\phi_2(R_0, z) =\phi_1(z), \quad \phi_2(R,z)=0 \\
& \phi_2=0 \quad \text{if} \quad
z=R, -R;
\end{cases}
\eal
\ee
Note that $\phi_2$ still statisfies a standard boundary gradient bound $|\partial_r \phi_2|_{r=R_0} \le C_1(\|v\|_{L^\infty}).$
The terms $T_{16}, T_{17}, T_{46}, T_{47}$, coming from time derivatives, will not appear now. Thus we arrive at(See the derivation of \eqref{ineq421})
\be \label{mainineq-last}
\al
& \int_{D_1} ( |\p_r \Gamma|^2 + |\p_z \Gamma|^2 ) \phi^2_1 d\mu +
\int_{D_2} ( |\p_r \Gamma|^2 + |\p_z \Gamma|^2 ) \phi^2_2 d\mu \\
&\le -\frac{1}{6} R  + \left(\frac{1}{6}+ \frac{r_0-1}{3(R_0-r_0)} \right) \sup_{r = r_0} (1-\Gamma^2) R    + C \Vert \Gamma^2-1 \Vert_\infty  R_0 r_0 \Vert v_z \Vert_\infty   \\
 &\qquad  + C \Vert \p_z \Gamma \Vert_\infty r_0 R_0  + C_1 \sup_{r = R_0} (1-\Gamma^2) R \\
  &\qquad + |I_1|.
  \eal
\ee
Here the integral $I_1$ reads
\[
I_1 = \frac{1}{2} \int^R_{-R} \int^{R_0}_{r_0} v_r (\Gamma^2-1) \phi^2_1(z) \left( \lam'(r)- \frac{\lam(r)}{r} \right) drdz.
\]
This estimate is rather general, since we have not used the periodicity in $z$, or any convergence infomation on $\Gamma$. By Lemma \ref{lem-vvanishing} and the observation \eqref{limsup=sup}, we have $v_r \to 0$ and $\Gamma \to 1$ when $r\to \infty$. We continue to treat $I_1$ in a different way from Section 4.
Using $v_r = \partial_z L_\theta$ and integration by parts, we have
\begin{align}\label{I1new}
I_1 &= -\frac{1}{2} \int_{-R}^R \int_{r_0}^{R_0} (L_\theta(r,z)-L_\theta(r,0)) \partial_z (\Gamma^2-1) \phi_1^2(z) \left( \lam'(r)- \frac{\lam(r)}{r} \right) dr dz \nonumber\\
&\quad -\frac{1}{2}  \int_{-R}^R \int_{r_0}^{R_0} (L_\theta(r,z)-L_\theta(r,0))  (\Gamma^2-1) \partial_z \phi_1^2(z) \left( \lam'(r)- \frac{\lam(r)}{r} \right) dr dz \nonumber\\
&=: W_1 + W_2 \nonumber
\end{align}
Note that
$$|L_\theta(r,z)-L_\theta(r,0)| \le \sup_z |v_r(r,z)| Z_0 = o(1) \quad \text{as} \quad r\to \infty$$
and
$$\left| \lambda'(r)-\frac{ \lambda(r) }{r} \right| \le \frac{r_0-1}{R_0-r_0}+1 $$
for $r_0\le r \le R_0$. Since $\Gamma \to 1$ and $|\partial_z \phi_1| \lesssim \frac{1}{R}$, we have
\be \label{w2}
|W_2| \le o(1)(R_0-r_0)\left(\frac{r_0-1}{R_0-r_0}+1\right) \le o(1) R_0.
\ee
Here $o(1)$ is defined when $r_0 \to \infty$. Using H\"older's inequality,
\begin{align} \label{w1}
|W_1| &\le \frac{1}{2}  \int_{-R}^R \int_{r_0}^{R_0} |\partial_z \Gamma|^2 \phi_1^2 \lambda(r)drdz \nonumber\\
&\quad + 2 \int_{-R}^R \int_{r_0}^{R_0} (L_\theta(r,z)-L_\theta(r,0))^2 \Gamma^2 \phi_1^2 \left( \lambda'(r)-\frac{ \lambda(r)}{r}\right)^2 \frac{1}{\lambda(r)} drdz \nonumber \\
&\le \frac{1}{2}  \int_{-R}^R \int_{r_0}^{R_0} |\partial_z \Gamma|^2 \phi_1^2 \lambda(r)drdz\nonumber \\
&\quad + o(1) \frac{(r_0-1)^2}{R_0-r_0} R + o(1)R \int_{r_0}^{R_0} \frac{\lambda(r)}{r^2} dr\nonumber \\
&\le \frac{1}{2}  \int_{-R}^R \int_{r_0}^{R_0} |\partial_z \Gamma|^2 \phi_1^2 \lambda(r)drdz \nonumber \\
&\quad + o(1) \frac{(r_0-1)^2}{R_0-r_0} R + o(1)R.
\end{align}
Combing \eqref{mainineq-last}, \eqref{w1} and \eqref{w2} we finally arrive at
\be
\al
& \frac{1}{2}\int_{D_1} ( |\p_r \Gamma|^2 + |\p_z \Gamma|^2 ) \phi^2_1 d\mu +
\int_{D_2} ( |\p_r \Gamma|^2 + |\p_z \Gamma|^2 ) \phi^2_2 d\mu \\
&\le -\frac{1}{6} R  + \left(\frac{1}{6}+ \frac{r_0-1}{3(R_0-r_0)} \right) \sup_{r = r_0} (1-\Gamma^2) R    + C \Vert \Gamma^2-1 \Vert_\infty  R_0 r_0 \Vert v_z \Vert_\infty   \\
 &\qquad  + C \Vert \p_z \Gamma \Vert_\infty r_0 R_0  + C_1 \sup_{r = R_0} (1-\Gamma^2) R \\
  &\qquad + o(1) \frac{(r_0-1)^2}{R_0-r_0} R + o(1)R.
  \eal
\ee

It remains to let $r_0>>1$, and $R_0 >> r_0$ to obtain a contradiction. Thus we have proved $\Gamma \equiv 0$, which leads to the conclusion of Theorem \ref{thzhouqi} in the steady case.
\section*{Acknowledgement}
Z.L. was supported by NSFC (grant No. 11725102) and National Support Program for Young Top-Notch Talents. Q.S.Z. wishes to thank the Simons Foundation for its support, and Fudan University for its hospitality during his visit.


\frenchspacing
\bibliographystyle{plain}

\end{document}